\title{Relative entropy bounds for sampling with and without replacement}
\author{Oliver Johnson\thanks{School of Mathematics, 
	University of Bristol, Fry Building, Woodland Road, Bristol, 
	BS8 1UG, UK.
	Email: {\tt O.Johnson@bristol.ac.uk}
	} 
\and 
Lampros Gavalakis\thanks{Univ Gustave Eiffel, Univ Paris Est Creteil, CNRS, 
	LAMA UMR8050 F-77447 Marne-la-Vall{\'e}e, France. 
	Email: {\tt lampros.gavalakis@univ-eiffel.fr}. 
	L.G. has received funding from the European Union's Horizon 2020 
	research and innovation program
 	under the Marie Sklodowska-Curie grant agreement 
	No 101034255 {\euflag} and by the B{\'e}zout Labex, funded by ANR, 
	reference ANR-10-LABX-58.}
\and 
Ioannis Kontoyiannis%
\thanks{Statistical Laboratory, DPMMS,
	University of Cambridge,
	Centre for Mathematical Sciences,
        Wilberforce Road,
	Cambridge CB3 0WB, U.K.
        Email: {\tt yiannis@maths.cam.ac.uk}.
}
}
\date{\today}
\newcommand{\pr}{{\mathbb P}}
\newcommand{\ep}{{\mathbb E}}
\newcommand{\var}{{\rm Var}}
\newcommand{\AAA}{{\mathcal{A}}}
\newcommand{\wt}[1]{{\widetilde{#1}}}
\newcommand{\vc}[1]{{\boldsymbol{#1}}}
\newcommand{\supp}{{\rm supp}}
\newcommand{\typ}{{\mathcal T}}
\newcommand{\harmat}{Harremo\"{e}s and Mat\'{u}\v{s} \cite{harremoes:20}}
\newcommand{\II}{{\mathbb I}}
  \newtheorem{theorem}{Theorem}[section]
  \newtheorem{corollary}[theorem]{Corollary}
  \newtheorem{proposition}[theorem]{Proposition}
  \newtheorem{lemma}[theorem]{Lemma}
  \newtheorem{definition}[theorem]{Definition}
  \newtheorem{remark}[theorem]{Remark}
\begin{document}

\maketitle

\begin{abstract}
Sharp, nonasymptotic bounds are obtained for the relative entropy 
between the distributions of sampling with and without replacement 
from an urn with balls of $c\geq 2$ colors.  
Our bounds are asymptotically tight in certain regimes and,
unlike previous results, 
they depend on the number of balls of each colour in the urn.
The connection of these results with finite de Finetti-style
theorems is explored, and it is observed that a sampling bound
due to Stam (1978) combined with the convexity
of relative entropy yield a new finite de Finetti bound in relative
entropy, which achieves the optimal asymptotic convergence rate.
\end{abstract}

\section{Introduction}
\subsection{The problem}

Consider an urn containing $n$ balls, each ball having one 
of $c\geq 2$ different colours, and suppose we draw out $k\leq n$ of them. 
We will compare the difference in sampling with and without replacement. 
Write $\vc{\ell} = (\ell_1,\ell_2, \ldots, \ell_c)$ for the vector representing 
the number of balls of each colour in the urn, so that there are 
$\ell_i$ balls of colour $i$, $1\leq i\leq c$ 
and $\ell_1+\ell_2+\cdots+\ell_c = n$. 
Write $\vc{s} = (s_1,s_2, \ldots, s_c)$ for the number of balls 
of each colour drawn out, so that $s_1+s_2+\cdots+s_c = k$.

When sampling without replacement, the probability that the
colors of the $k$ balls drawn are given by $\vc{s}$ 
is given by the multivariate hypergeometric 
probability mass function (p.m.f.),
\begin{equation}
H(n,k, \vc{\ell}; \vc{s}) 
:= \frac{ \prod_{i=1}^c \binom{\ell_i}{s_i}}{\binom{n}{k}}
 = 
\frac{ \binom{k}{\vc{s}} \binom{n-k}{\vc{\ell}- \vc{s}}}{\binom{n}{\vc{\ell}}},
\label{eq:MVhyp}
\end{equation}
for all $\vc{s}$ with
$0\leq s_i\leq\ell_i$ for all $i$, and
$s_1+\cdots+s_c=k$. In~(\ref{eq:MVhyp}) and
throughout, 
we write $\binom{v}{\vc{u}} = v!/\prod_{i=1}^c u_i!$ for the 
multinomial coefficient for any 
vector $\vc{u}=(u_1,\ldots,u_c)$ with 
$u_1+\cdots+u_c = v$.

Our goal is to compare 
$H(n,k, \vc{\ell}; \vc{s})$
with the corresponding p.m.f.\ $B(n, k, \vc{\ell}; \vc{s})$
of sampling with replacement,
which is given by the multinomial distribution p.m.f.,
\begin{equation} 
\label{eq:multin}
B(n, k, \vc{\ell}; \vc{s}) 
:= \binom{k}{\vc{s}} \prod_{i=1}^c \left( \frac{\ell_i}{n} \right)^{s_i},\end{equation}
for all $\vc{s}$ with
$s_i\geq 0$ and
$s_1+\cdots+s_c=k$.

The study of the relationship between sampling 
with and without replacement
has a long history and numerous applications
in both statistics and probability;
see, e.g., the classic review~\cite{rao:66} or the 
text~\cite{thompson:12}.
Beyond the elementary observation that $B$ and $H$ have
the same mean, namely, that,
$$\sum_{\vc{s}} B(n,k,\vc{\ell}; \vc{s}) s_i 
=\sum_{\vc{s}} H(n,k,\vc{\ell}; \vc{s}) s_i = k \ell_i/n,$$
for each $i$, 
it is well known that in certain limiting regimes 
the p.m.f.s themselves
are close in a variety of senses. 
For example,
Diaconis and Freedman~\cite[Theorem~4]{diaconis-freedman:80b} 
show 
that $H$ and $B$ are close in the total variation 
distance $\| P-Q \| := \sup_A |P(A) - Q(A)|$:
\begin{equation} 
\label{eq:diaconis_s}
\|H(n,k, \vc{\ell}; \cdot)-B(n,k, \vc{\ell}; \cdot)\| \leq \frac{ck}{n}.
\end{equation}

In this paper we give bounds on the relative entropy 
(or Kullback-Leibler divergence) between $H$ and
$B$, which for brevity we denote as:
\begin{eqnarray} 
D(n,k, \vc{\ell}) & := &
D \left( H(n,k, \vc{\ell}; \cdot) \| B(n,k, \vc{\ell}; \cdot) \right) 
\nonumber \\
& = & \sum_{\vc{s}} H(n,k, \vc{\ell}; \vc{s}) \log \Big( 
\frac{ H(n,k, \vc{\ell}; \vc{s})}{B(n,k, \vc{\ell}; \vc{s})} \Big).
\label{eq:defrelent}
\end{eqnarray}
[All logarithms are natural logarithms to base $e$.]
Clearly, if $\ell_r = 0$ for some $r$,
then bounding $D(n,k;\vc{\ell})$ becomes equivalent 
to the same problem with a smaller number of colours $c$, 
so we may assume that each $\ell_r \geq 1$ for simplicity.

Interest in controlling the relative entropy derives
in part from the fact that it
naturally arises in many core statistical problems, 
e.g., as the optimal hypothesis testing error exponent
in Stein's lemma; see, e.g.,~\cite[Theorem~12.8.1]{cover:book2}. 
Further, bounding the relative entropy also provides
control of the distance between $H$ and $B$ in other senses. 
For example, Pinsker's inequality,
e.g.~\cite[Lemma~2.9.1]{tsybakov:09},
states that for any two 
p.m.f.s $P$ and $Q$,
\begin{equation} \label{eq:pinsker} 
2\| P - Q \|^2 \leq D(P \| Q),
\end{equation}
while the Bretagnolle-Huber bound~\cite{bretagnolle:78,canonne:22}
gives:
\begin{equation*} 
\| P - Q \|^2 \leq 1 - \exp\{-D(P \| Q)\}.
\end{equation*}

Our results also follow along a long line of work that has been
devoted to establishing probabilistic theorems in terms of relative 
entropy. Information-theoretic arguments often 
provide insights into the underlying reasons why the result
at hand holds. Among many others, examples include the 
well-known work of Barron~\cite{barron:clt} on the information-theoretic 
central limit theorem;
information-theoretic proofs of Poisson convergence
and Poisson approximation~\cite{konto-H-J:05}; 
compound Poisson approximation bounds~\cite{Kcompound:10};
convergence to Haar measure on compact groups~\cite{harremoes:09b}; 
connections with extreme value theory~\cite{johnson:24};
and the discrete central limit theorem~\cite{gavalakis:clt}.
We refer 
to~\cite{gavalakis-LNM:23} for a detailed review of this line of work. 

\newpage

The relative entropy in~\eqref{eq:defrelent} has been studied before.
Stam~\cite{stam:78} established the following bound,
\begin{equation} 
\label{eq:stam}
 D \left( n,k, \vc{\ell} \right) 
\leq \frac{(c-1) k(k-1)}{2(n-1)(n-k+1)},
\end{equation}
uniformly in $\vc{\ell}$,
and also provided an asymptotically matching lower bound,
\begin{equation} \label{eq:lowerbd}
D \left( n,k, \vc{\ell} \right) \geq \frac{(c-1) k(k-1)}{4(n-1)^2},
\end{equation}
indicating that, 
in the regime $k=o(n)$ and fixed $c$,
the relative entropy
$D \left( n,k, \vc{\ell} \right)$ is of order $(c-1) k^2/2n^2$.

More recently, related bounds were 
established by \harmat, who showed that (see \cite[Theorem 4.5]{harremoes:20}), 
\begin{equation}
\label{eq:harmat}
 D \left( n,k, \vc{\ell} \right) \leq (c-1) \left(  \log \left( \frac{n-1}{n-k} \right) - \frac{k}{n} + \frac{1}{n-k+1} \right),
\end{equation}
and (see \cite[Theorem 4.4]{harremoes:20}),
\begin{equation}
\label{eq:harmat2}
 D \left( n,k, \vc{\ell} \right) \geq \frac{(c-1)}{2} \left( r - 1 - \ln r  \right), \mbox{\qquad for } r = \frac{n-k+1}{n-1},
\end{equation}
both results also holding uniformly in $\vc{\ell}$.
Moreover, in the regime where $k/n \rightarrow s$ 
and $0 < \epsilon \leq \ell_r/n$ for all $r$, 
they showed that the lower bound in~\eqref{eq:harmat2} is asymptotically sharp and
the upper bound in~\eqref{eq:harmat} is sharp to within a factor of 2 by proving that:
\begin{equation} \label{eq:limit}
\lim_{n \rightarrow \infty} D \left( n,k, \vc{\ell} \right) 
= \frac{c-1}{2} \left(  - \log(1-s) -s \right) 
\approx 
(c-1) \left( \frac{s^2}{4} + \frac{s^3}{6} \right).
\end{equation}

\subsection{Main results}
\label{sec:discussion}

Despite the fact that~\eqref{eq:stam} and~\eqref{eq:harmat} are both
asymptotically optimal in the sense described above, it turns out
it is possible to obtain more accurate upper bounds 
on $D(n,k,\vc{\ell})$ if we allow them to
depend on~$\vc{\ell}$.\footnote{Indeed, as remarked
by \harmat, ``a good upper bound 
should depend on $\vc{\ell}$'' (in our notation).}
In this vein, our main result is the following;
it is proved in Section~\ref{sec:entbd}.

\begin{theorem} 
\label{thm:main} 
For any $1\leq k \leq n/2$, 
$c\geq 2$, and $\vc{\ell}=(\ell_1,\cdots,\ell_c)$
with $\ell_1+\cdots+\ell_c=n$ and $\ell_r\geq 1$
for each $r$, 
the relative entropy 
$D(n,k,\vc{\ell})$ between $H$ and $B$ satisfies:
\begin{eqnarray}
D(n,k, \vc{\ell}) & \leq &
\frac{c-1}{2} \Big( \log \Big( \frac{n}{n-k} \Big)
- \frac{k}{n-1} \Big)  
\nonumber \\
&  &  +  \frac{k(2n+1)}{12n(n-1)(n-k)} \sum_{i=1}^c \frac{n}{\ell_i}
+ \frac{1}{360} \Big( \frac{1}{(n-k)^3}  - \frac{1}{n^3} \Big) 
\sum_{i=1}^c \frac{n^3}{\ell_i^3}.\qquad 
\label{eq:simplebd}
\end{eqnarray}
\end{theorem}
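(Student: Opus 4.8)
The plan is to turn $D(n,k,\vc\ell)$ into an exact expression involving only factorials, apply Stirling's formula with explicit remainders, and then carefully collect the resulting terms. First, using the factorial forms of $H$ and $B$ in~\eqref{eq:MVhyp} and~\eqref{eq:multin}, the quantities $\log(s_i!)$ and $\log(k!)$ cancel in $\log\bigl(H(n,k,\vc\ell;\vc s)/B(n,k,\vc\ell;\vc s)\bigr)$, leaving
\[
D(n,k,\vc\ell)=\log(n-k)!-\log n!+k\log n+\sum_{i=1}^c\ep_H\!\left[\log\frac{\ell_i!}{(\ell_i-S_i)!\,\ell_i^{\,S_i}}\right],\qquad \vc S\sim H(n,k,\vc\ell;\cdot).
\]
Throughout one uses $\ep_H[S_i]=k\ell_i/n$, the falling‑factorial moments $\ep_H\bigl[S_i(S_i-1)\cdots(S_i-r+1)\bigr]=k^{\underline r}\ell_i^{\underline r}/n^{\underline r}$, and $\sum_i\ell_i=n$. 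Now write $\log m!=m\log m-m+\tfrac12\log(2\pi m)+\mu(m)$ with $\tfrac1{12m}-\tfrac1{360m^3}\le\mu(m)\le\tfrac1{12m}$ and apply it to $n!$, $(n-k)!$, $\ell_i!$ and to $(\ell_i-S_i)!$; the boundary event $\{S_i=\ell_i\}$, which can occur only when $\ell_i\le k$, is covered by the form of Stirling valid down to argument $0$ (or treated directly). Collecting the $m\log m$ and $-m$ pieces — which telescope using $\sum_i\ep_H[S_i]=k$ and $\sum_i\ell_i=n$ — yields, with $Z_i:=\ell_i-S_i$ and $\bar Z_i:=\ep_H Z_i=\ell_i(n-k)/n$, the \emph{exact} identity
\[
D(n,k,\vc\ell)=\frac{c-1}{2}\log\frac{n}{n-k}\;-\;\sum_{i=1}^c\Bigl(\ep_H[\varphi(Z_i)]-\varphi(\bar Z_i)\Bigr)\;+\;\mathcal E,\qquad \varphi(z):=\bigl(z+\tfrac12\bigr)\log z,
\]
where $\mathcal E:=\mu(n-k)-\mu(n)+\sum_i\bigl(\mu(\ell_i)-\ep_H[\mu(Z_i)]\bigr)$ collects the Stirling remainders.

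Next I treat the Jensen gaps. Since $\varphi''(z)=\tfrac1z-\tfrac1{2z^2}\ge0$ for $z\ge\tfrac12$ and $Z_i\ge\ell_i-k\ge1$ in the generic case $\ell_i>k$, each summand is nonnegative. Expanding $\varphi$ about $\bar Z_i$ to second order with an integral form of the remainder gives $\ep_H[\varphi(Z_i)]-\varphi(\bar Z_i)=\tfrac12\varphi''(\bar Z_i)\var_H(S_i)+R_i$. Using $\var_H(S_i)=k\tfrac{\ell_i}{n}\tfrac{n-\ell_i}{n}\tfrac{n-k}{n-1}$ one gets $\tfrac{\var_H(S_i)}{2\bar Z_i}=\tfrac{k(n-\ell_i)}{2n(n-1)}$ and $\tfrac{\var_H(S_i)}{4\bar Z_i^2}=\tfrac{k(n-\ell_i)}{4\ell_i(n-1)(n-k)}$, so summing via $\sum_i(n-\ell_i)=(c-1)n$ the quadratic part contributes exactly $\tfrac{(c-1)k}{2(n-1)}-\tfrac{k\bigl(\sum_i n/\ell_i-c\bigr)}{4(n-1)(n-k)}$. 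For the remainder $R_i$, extracting its leading term (which involves the hypergeometric third central moment $\mu_3^{(i)}$ and has the favourable sign for every colour with $\ell_i<n/2$, and is negligible for the at most one colour with $\ell_i\ge n/2$) and bounding the fourth‑order tail — here $k\le n/2$ gives $\tfrac{\ell_i}{2}\le\bar Z_i\le\ell_i$ and hence good control of the central moments of $S_i$ relative to $\bar Z_i$ — shows $-\sum_iR_i$ only feeds into an $O(\sum_i n/\ell_i)$ error. This is what produces the $-\tfrac{c-1}{2}\cdot\tfrac{k}{n-1}$ term of the theorem.

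Finally I bound $\mathcal E$ and assemble. Using $\mu(m)\le\tfrac1{12m}$, $\mu(m)\ge\tfrac1{12m}-\tfrac1{360m^3}$ and Jensen's inequality ($\ep_H[1/Z_i]\ge1/\bar Z_i$, and similarly for $1/Z_i^3$), one obtains $\mathcal E\le \tfrac k{12n(n-k)}\bigl(1-\sum_i n/\ell_i\bigr)+\tfrac1{360n^3}+\tfrac1{360}\sum_i\ep_H[1/Z_i^3]$, with $\tfrac1{12\ell_i}-\tfrac1{12\bar Z_i}=-\tfrac{k}{12\ell_i(n-k)}$. The $\tfrac1{12}$‑order contributions of the two previous steps then combine, via the elementary identity $\tfrac1{4(n-1)}-\tfrac1{12n}=\tfrac{2n+1}{12n(n-1)}$, into precisely $\tfrac{k(2n+1)}{12n(n-1)(n-k)}\sum_i\tfrac n{\ell_i}$ — the second term of the theorem — while the $\tfrac1{360}$‑order contributions (essentially $\tfrac1{360}\sum_i\ep_H[1/Z_i^3]$, with $\ep_H[1/Z_i^3]$ close to $1/\bar Z_i^3=n^3/(\ell_i^3(n-k)^3)$) together with the leftover negative pieces are dominated by $\tfrac1{360}\bigl(\tfrac1{(n-k)^3}-\tfrac1{n^3}\bigr)\sum_i\tfrac{n^3}{\ell_i^3}$. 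For colours with $\ell_i\le k$ — where the identity above breaks because $\{Z_i=0\}$ has positive probability — one instead checks that the colour's entire contribution is dominated by the (generous) $n/\ell_i$ and $n^3/\ell_i^3$ allowances.

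\textbf{Main obstacle.} The heart of the proof is the last two steps: one must track every error term with its correct sign and verify that the $\sum_i n/\ell_i$–coefficients coming from the quadratic Taylor term and from the $\tfrac1{12}$–Stirling correction combine \emph{exactly}, and that all cubic‑ and $\tfrac1{360}$‑order leftovers — including the contributions of small $\ell_i$ and of the degenerate event $\{S_i=\ell_i\}$ — fit under the $\sum_i n^3/\ell_i^3$ allowance; this is also where the hypothesis $k\le n/2$ is used.
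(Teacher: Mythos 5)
Your plan is recognisably the same family of argument as the paper's: write $D(n,k,\vc\ell)$ in terms of factorials (equivalently the paper's $U(a,b)=\log(a^b\Gamma(a-b+1)/\Gamma(a+1))$, giving $D=U(n,k)-\sum_i\ep U(\ell_i,S_i)$), apply a Stirling-type expansion with two-sided remainders of orders $1/(12m)$ and $1/(360m^3)$, Taylor-expand the convex part about the mean, and collect. Your ``exact identity'' with $\varphi(z)=(z+\tfrac12)\log z$ and $Z_i=\ell_i-S_i$ is correct (I checked the $\tfrac{c-1}{2}\log\frac{n}{n-k}$ cancellation), and the $\tfrac{2n+1}{12n(n-1)}$ assembly from $\tfrac1{4(n-1)}-\tfrac1{12n}$ is exactly what the paper does in combining its two lemmas. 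So the endgame bookkeeping is right.

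Where the proposal has a genuine gap is the \emph{order of operations}, and it matters. The paper first recenters: it writes $D=\bigl(U(n,k)-\sum_i U(\ell_i,\ell_ik/n)\bigr)+\sum_i\bigl(U(\ell_i,\ell_ik/n)-\ep U(\ell_i,S_i)\bigr)$, applying Stirling-type bounds only to the \emph{deterministic} first bracket (so the $1/(360m^3)$ error sits at $m=\ell_i(1-k/n)$ and is exactly the $\sum_i n^3/\ell_i^3$ term of the theorem), and handles the \emph{random} second bracket by a Taylor expansion of $\log\Gamma$ about the mean using digamma bounds, where the fourth-order Lagrange remainder has a clean sign ($\psi'''\ge 0$). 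You instead apply Stirling to the random quantity $\log Z_i!$, which forces you to control $\ep[1/Z_i^3]$ \emph{from above}. Jensen gives $\ep[1/Z_i^3]\ge 1/\bar Z_i^3$, i.e.\ the wrong direction, and your parenthetical ``$\ep_H[1/Z_i^3]$ close to $1/\bar Z_i^3$'' is not a bound — indeed $Z_i$ can be as small as $\max(\ell_i-k,0)$, so $\ep[1/Z_i^3]$ can be far larger than $1/\bar Z_i^3$. The same issue makes your handling of the event $\{Z_i=0\}$ (positive probability when $\ell_i\le k$) more than a footnote. The paper's recentred decomposition sidesteps both of these entirely.

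A second, related gap is the cubic remainder. You observe the third-order term has ``favourable sign'' for colours with $\ell_i<n/2$ and declare it ``negligible'' for the one colour with $\ell_i\ge n/2$. But the theorem's constant is tight: the paper's proof produces a small \emph{negative} leftover which is simply dropped, with no room to absorb an additional positive error. The paper instead proves the whole cubic contribution is nonpositive: it uses that $\psi''$ is increasing (and negative) to bound $\sum_i\psi''(\ell_i(1-k/n)+1)M_3(S_i)\le\psi''(\ell_c(1-k/n)+1)\sum_i M_3(S_i)$, and then shows $\sum_i M_3(S_i)\ge 0$ by a concavity/minimisation argument on $f(\ell)=\ell(n-\ell)(n-2\ell)$. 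To land on the exact constants in~\eqref{eq:simplebd} you need some version of this sign argument, not a hand-waved ``negligible''. So the route you sketch would, as written, give a bound of the right shape but not the stated one; the two decomposition tricks just described are the missing ideas.
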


While the bound of Theorem~\ref{thm:main} is simple,
straightforward, and works well for the balanced cases 
described below, it is least accurate in the case
of small $\ell_i$; see Figure~\ref{fig:smallk}.
We therefore also provide an alternative expression 
in Proposition~\ref{prop:alt}. For simplicity, we only 
give the result in the case $c=2$, though similar 
arguments will work in general. It is proved
in Appendix~\ref{sec:appsmallL}. 

\begin{proposition} \label{prop:alt}
Under the assumptions of Theorem~\ref{thm:main},
if $c=2$ and $\ell \leq n/2$, we have:
\begin{eqnarray*}
 D ( n,k, (\ell,n-\ell) ) & \leq &
\ell \Big[ \Big(1-\frac{k}{n}\Big) \log\Big(1-\frac{k}{n}\Big) 
	+ \frac{k}{n} - \frac{k}{2 n(n-1)}  \Big]   
	\\
& & + \frac{k \ell}{(n-1) (n-\ell)(n-k)}
	\\
& &  - \frac{k(k-1)}{2n(n-1)}  \ell(\ell-1) \log 
	\Big( \frac{\ell}{\ell-1} \Big) 
	\\
& & - \frac{k(k-1)(k-2)}{6n(n-1)(n-2)}  \ell(\ell-1)(\ell-2)  
	\log \Big( \frac{(\ell-1)^2}{\ell(\ell-2)} \Big).  
\end{eqnarray*}
\end{proposition}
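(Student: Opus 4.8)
The plan is to expand the relative entropy $D(n,k,(\ell,n-\ell))$ directly and estimate each term carefully, keeping the dependence on $\ell$ explicit rather than maximizing over $\vc{\ell}$ as in Theorem~\ref{thm:main}. In the case $c=2$, writing $\vc{s}=(s,k-s)$, we have
\[
D(n,k,(\ell,n-\ell)) = \sum_{s} H(n,k,(\ell,n-\ell);(s,k-s)) \log\frac{H(n,k,(\ell,n-\ell);(s,k-s))}{B(n,k,(\ell,n-\ell);(s,k-s))},
\]
and the log-ratio of a single hypergeometric to a single binomial term factors into a sum of terms of the form $\log\binom{\ell}{s} + \log\binom{n-\ell}{k-s} - \log\binom{n}{k} - s\log(\ell/n) - (k-s)\log((n-\ell)/n)$. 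First I would use the factorial identity $\log\binom{m}{j} = \log m! - \log j! - \log(m-j)!$ together with Stirling's series with explicit remainder, $\log m! = m\log m - m + \tfrac12\log(2\pi m) + \tfrac1{12m} - \tfrac1{360 m^3} + \cdots$, to expand every factorial appearing in the log-ratio. The leading terms in $m\log m - m$ will reassemble (after taking the $H$-expectation, which replaces $s$ by its mean $k\ell/n$ up to controllable fluctuation terms) into the entropy-type expression $\ell[(1-k/n)\log(1-k/n)+k/n]$ that appears in the first bracket of the claimed bound.

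The second step is to handle the fluctuation of $s$ around its mean $k\ell/n$. The terms linear in $s$ cancel in expectation against the binomial normalization, but the quadratic (and cubic) Stirling corrections $\tfrac1{12 s}$, $\tfrac1{12(\ell-s)}$, etc., and the second-order Taylor terms coming from expanding $\log s!$ around $s = k\ell/n$ will produce contributions governed by the variance (and third moment) of the hypergeometric $s$. Here I would invoke the known hypergeometric variance $\var(s) = k\frac{\ell}{n}\frac{n-\ell}{n}\frac{n-k}{n-1}$ and bound the higher moments crudely; these are exactly what generate the $\frac{k\ell}{(n-1)(n-\ell)(n-k)}$ term and, more subtly, the last two lines of the proposition, which carry explicit factors $\binom{k}{2}/\binom{n}{2}\cdot \ell(\ell-1)\log(\ell/(\ell-1))$ and $\binom{k}{3}/\binom{n}{3}\cdot\ell(\ell-1)(\ell-2)\log\frac{(\ell-1)^2}{\ell(\ell-2)}$ — their shape strongly suggests that rather than Stirling one should instead use the \emph{exact} representation $\binom{\ell}{s} = \prod$ or, better, expand $H/B$ as a product and take logs term-by-term, recognizing falling-factorial ratios like $\ell^{(s)}/\ell^s$ and bounding $\log$ of each ratio via convexity. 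Concretely, $\log\big(\ell^{(s)}/\ell^s\big) = \sum_{j=0}^{s-1}\log(1-j/\ell)$, and summing the $H$-expectation of these over the relevant ranges, together with the identity $\ep_H\binom{s}{j} = \binom{k}{j}\binom{\ell}{j}/\binom{n}{j}$, produces precisely the combinatorial prefactors in the statement; the $\log\big((\ell-1)^2/(\ell(\ell-2))\big)$ and $\log(\ell/(\ell-1))$ arise as second differences of $j\mapsto\log(\ell-j)$.

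The main obstacle I anticipate is keeping the remainder terms from the Stirling expansion (or from truncating the $\sum_j \log(1-j/\ell)$ series) controlled uniformly, with the right sign, so that they can be absorbed into the stated closed-form bound rather than left as an uncontrolled error. In particular, several of the terms in the proposition are \emph{negative} (the last two lines and part of the first bracket), so the argument cannot be a crude term-by-term absolute-value bound; one must track cancellations between the positive Stirling corrections on $\log\ell!$-type terms and the negative contributions from $\log s!$ and $\log(\ell-s)!$, and exploit the constraint $\ell\le n/2$, $k\le n/2$ to sign the leftover pieces. I would organize this by grouping the expansion into (i) a ``mean'' part giving the entropy term, (ii) a ``variance'' part giving the $\frac{k\ell}{(n-1)(n-\ell)(n-k)}$ term, (iii) the exact falling-factorial corrections giving the last two lines, and (iv) a genuinely negligible remainder that I would bound by something smaller than the slack already present; verifying that (iv) is indeed dominated — likely the most delicate monotonicity/convexity estimate — is where most of the technical work will lie, and this is deferred to Appendix~\ref{sec:appsmallL} precisely because it is calculation-heavy rather than conceptually hard.
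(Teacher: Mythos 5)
Your plan is on the right track and lands on essentially the ideas the paper uses, in particular the key identification of the last two lines as finite-difference (Newton series) coefficients of $s\mapsto -\log\Gamma(\ell-s+1)$ combined with the factorial-moment identity $\ep_H[(S)_j]=(k)_j(\ell)_j/(n)_j$ of Lemma~\ref{lem:fallmom}: indeed $\Delta^2$ and $\Delta^3$ of that function at $0$ give exactly $-\log(\ell/(\ell-1))$ and $-\log((\ell-1)^2/(\ell(\ell-2)))$. What the paper makes crisper, and where you waver, is that the treatment is deliberately \emph{asymmetric}: after writing $\log(H/B)=U(n,k)-U(\ell,S_1)-U(n-\ell,S_2)$ with $U(a,b)=\log\big(a^b\Gamma(a-b+1)/\Gamma(a+1)\big)$, the small-argument piece $-\ep U(\ell,S_1)$ is expanded exactly via a truncated Newton series (Lemma~\ref{lem:smallL2}, whose remainder has the right sign because $f^{(3)}\le 0$ on the relevant range), while the $n$- and $(n-\ell)$-pieces are handled by Stirling-type two-sided bounds on $U$ (Proposition~\ref{prop:Ubd}) plus a Taylor expansion of $\log\Gamma$ around the hypergeometric mean (Lemma~\ref{lem:sumform2}). "Stirling for everything" — your first impulse — would not give a sharp bound here, because the $1/(12\ell)$- and $1/(360\ell^3)$-type corrections blow up for small $\ell$, which is precisely the regime this proposition targets; the exact Newton expansion sidesteps that. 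The sign control you correctly anticipate as the delicate step is supplied by Alzer's inequalities for the polygamma functions ($\psi'''\ge 0$, two-sided bounds on $\psi'$, $\psi''$) together with $k\le n/2$ and $\ell\le n/2$; those are exactly the ingredients you would need to discover. One small slip: the log-ratio you wrote using $\binom{\ell}{s}\binom{n-\ell}{k-s}/\binom{n}{k}$ omits the $-\log\binom{k}{s}$ contributed by $B$; it is cleaner to start from the second form $H=\binom{k}{\vc{s}}\binom{n-k}{\vc{\ell}-\vc{s}}/\binom{n}{\vc{\ell}}$, so that the $\binom{k}{\vc{s}}$ cancels and the $U$-expression emerges directly.
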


Figure~\ref{fig:smallk} shows a comparison of the earlier uniform
bounds~\eqref{eq:stam},~\eqref{eq:lowerbd} and~\eqref{eq:harmat},
with the new bounds in Theorem~\ref{thm:main} 
and Proposition~\ref{prop:alt},
for the case $n=100$, $k=30$, and $\vc{\ell}=(\ell,1-\ell)$,
for $1\leq\ell\leq 50$. It is seen that the combination
of the two new upper bounds outperforms the earlier ones
in the entire range of values considered.

\begin{figure}[ht!]
\centerline{\includegraphics[width=9cm]{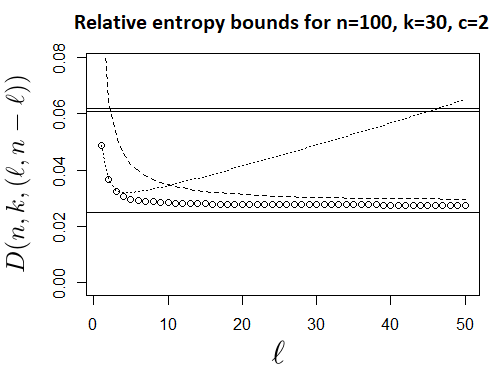}}
\caption{Comparison of the true values of $D(100,30,(\ell,100-\ell))$,
plotted as circles,
with the uniform upper bounds in~\eqref{eq:stam},~\eqref{eq:harmat} 
and the uniform lower bound~\eqref{eq:harmat2}, 
all plotted as straight lines, and also with
the new bounds in Theorem~\ref{thm:main} 
(dashed line)
and Proposition~\ref{prop:alt} (dotted line).
The combination of the two new bounds outperforms those 
of Stam~\cite{stam:78} and of \harmat\ for the whole
range of the values of $\ell$ in this case.}
\label{fig:smallk} 
\end{figure}

\noindent
{\bf Remarks. }
\begin{enumerate}
\item
The dependence of the bound~(\ref{eq:simplebd})
in Theorem~\ref{thm:main}
on $\vc{\ell}$ is only via the quantities:
\begin{equation} 
\label{eq:invell}
\Sigma_1(n,c,\vc{\ell}):=\sum_{r=1}^c \frac{n}{\ell_r}
\quad\mbox{and}\quad
\Sigma_2(n,c,\vc{\ell}):=\sum_{r=1}^c \frac{n^3}{\ell_r^3}.
\end{equation}
For $\Sigma_1$, 
in the `balanced' case where each $\ell_r = n/c$, 
we have $\Sigma_1(n,c,\vc{\ell})= c^2$, and using Jensen's inequality it is
easy to see that we always have $\Sigma_1(n,c,\vc{\ell})\geq c^2$.
In fact, the Schur-Ostrowski criterion shows that~(\ref{eq:invell})
is Schur convex, and hence respects the majorization order, 
being largest in `unbalanced' cases.
For example, if
$\ell_1 = \cdots = \ell_{c-1} = 1$ and $\ell_c = n-c+1$,
then $\Sigma_1(n,c,\vc{\ell})=(c-1) n + 1/(n-c+1) \sim n(c-1)\to\infty$ 
as $n \rightarrow \infty$. 
On the other hand, in the regime considered by~\harmat\ where all 
$\ell_r \geq \epsilon n$ for some $0 < \epsilon \leq 1/c$,
we have $\Sigma_1(n,c,\vc{\ell}) \leq c/\epsilon$, 
which is bounded in~$n$.

Similar comments apply to $\Sigma_2(n,c,\vc{\ell})$:
It is bounded in balanced cases, including 
the scenario of \harmat, but can grow like $n^3(c-1)$ in 
the very unbalanced case.


\item
The first term in the bound~(\ref{eq:simplebd}) of Theorem~\ref{thm:main},
$$\frac{c-1}{2} \Big( \log \Big(\frac{n}{n-k} \Big) 
- \frac{k}{n-1} \Big),$$ 
matches the asymptotic expression in~(\ref{eq:limit}), so 
Theorem~\ref{thm:main} may be
regarded as a nonasymptotic version version of~(\ref{eq:limit}).
In particular,~\eqref{eq:limit} is only valid in 
the asymptotic case where all $\ell_r \geq \epsilon n$, 
when, as discussed above,
$\Sigma_1(n,c,\vc{\ell})$ is bounded. Therefore, in this balanced 
case Theorem~\ref{thm:main} recovers the asymptotic result 
of~\eqref{eq:limit}. 

\item
Any bound that holds uniformly in $\vc{\ell}$ must hold in particular 
for the binary $c=2$ case with $\ell_1 = 1, \ell_2=n-1$. 
Then $H$ is Bernoulli with 
parameter $k/n$ and $B$ is binomial with parameters $k$ and~$1/n$, 
and direct calculation 
gives the exact expression:
\begin{equation*}
D \left( n,k, (1,n-1) \right)
 =  \left(1- \frac{k}{n} \right) 
\log \left(1- \frac{k}{n} \right) - 
k \left(1 - \frac{1}{n} \right) \log \left(1 - \frac{1}{n} \right). 
\end{equation*}
Taking $n\to\infty$ while $k/n \rightarrow s$ 
for some fixed $s$, in this unbalanced case where $\ell_1=1$ and $\ell_2=n-1$,
we obtain:
\begin{equation} 
\label{eq:limit2}
\lim_{n \rightarrow \infty} D \left( n,ns, (1,n-1) \right) = s + (1-s) \log(1-s) \approx \frac{s^2}{2} + \frac{s^3}{6}.
\end{equation}
Compared with the analogous limiting expression~\eqref{eq:limit}
in the balanced case where $\ell_1,\ell_2$ are both bounded
below by $\epsilon n$, 
this suggests that the asymptotic behaviour of $D(n,k,\vc{\ell})$ 
in the regime $k/n\to s$
may vary by a factor of~2 over different values of $\ell$.

\item
Comparing the earlier asymptotic expression~\eqref{eq:limit} 
with~\eqref{eq:limit2}, reveals some interesting behaviour 
of $D(n,k, (\ell,n-\ell))$. 
There is a unique value $s^* \approx 0.8834$ such that,
for $s < s^*$, corresponding to small $k$, 
the limiting expression in~\eqref{eq:limit2} is larger 
than that in~\eqref{eq:limit}, 
whereas, for $s > s^*$, the expression in~\eqref{eq:limit} is larger.
\end{enumerate}

\subsection{Finite de Finetti theorems} 
\label{sec:impdefi}

Diaconis~\cite{diaconis:77}
and Diaconis and Freedman~\cite{diaconis-freedman:80b}
revealed an interesting and intimate connection between
the problem of comparing sampling with and without
replacement, and finite versions of de Finetti's theorem.
Recall that de Finetti's theorem says that the distribution
of a infinite collection of exchangeable random variables 
can be represented as a mixture over independent and
identically distributed (i.i.d.) sequences;
see, e.g.,~\cite{kirsch:19} 
for an elementary proof in the binary case.
Although such a representation does not always exist
for {\em finite} exchangeable sequences, 
approximate versions are possible:
If $(X_1,\ldots,X_n)$ are exchangeable, then the distribution
of $(X_1,\ldots,X_k)$ is close to a mixture of independent
sequences as long as $k$ is relatively small compared to $n$.
Indeed, 
Diaconis and Freedman~\cite{diaconis-freedman:80b} proved 
such a finite de Finetti theorem using
the sampling bound~\eqref{eq:diaconis_s}. 

Let $\AAA= \{ a_1, \ldots, a_c \}$ be an alphabet of size $c$,
and suppose that the random variables
$\vc{X}_n = (X_1, \ldots, X_n)$, with values $X_i \in \AAA$,
are exchangeable, that is, the distribution of $\vc{X}_n$
is the same as that
of $\left(X_{\pi(1)}, \ldots, X_{\pi(n)} \right)$ for any 
permutation $\pi$ on $\{1,\ldots,n\}$.
Given a sequence $\vc{y} \in \AAA^m$ of length $m$,
its {\em type} $\typ_m(\vc{y})$~\cite{csiszar:98} 
is the vector of its empirical frequencies: The 
$r$th component of $\typ_m(\vc{y})$ is:
$$ \frac{1}{m} \sum_{i=1}^m \II(y_i = a_r),
\quad 1\leq r\leq c.$$
The sequence $\vc{X}_n$ induces a measure $\mu$ on 
p.m.f.s $\vc{p}$ on $\AAA$, via:
\begin{equation} \label{eq:empirical} \mu(\vc{p}) = 
\pr \left(\typ_n(\vc{X}_n) = \vc{p} \right).\end{equation}
This is the law of the empirical measure induced
by $\vc{X}_n$ on $\AAA$.

For each $1\leq k\leq n$, let  $P_k$ denote the
joint p.m.f.\ of $\vc{X}_k = (X_1, \ldots, X_k)$,
and write,
\begin{equation}
M_{k,\mu}(\vc{z}) = 
\int \vc{p}^k(\vc{z}) d\mu(\vc{p}),
\quad \vc{z}\in\AAA^k,
\label{eq:mixture}
\end{equation}
for the mixture of the i.i.d.\ distributions 
$\vc{p}^k$ with respect to the mixing measure $\mu$. 
A key step in the connection between sampling bounds and 
finite de Finetti theorems is the simple observation that,
given that its type $\typ_k(\vc{X}_k)=\vc{q}$,
the exchangeable sequence $\vc{X}_k$ is uniformly distributed on the 
set of sequences with type~$\vc{q}$. 
In other words, for any $\vc{x}\in\AAA^k$,
\begin{equation}
P_k(\vc{x}) = 
\frac{1}{ \binom{k}{\vc{s}}}
\sum_{\vc{p}} H(n,k, n \vc{p}; \vc{s})  \mu(\vc{p}),
\label{eq:Pkrep}
\end{equation}
where $\vc{s}=k\typ_k(\vc{x})$.
This elementary and rather obvious (by symmetry)
observation already appears implicitly in the 
literature, e.g., in~\cite{diaconis:77,gavalakis-LNM:23}.

An analogous representation can be easily seen 
to hold for $M_{k,\mu}$. Since the probability
of an i.i.d.\ sequence only depends on its type,
for any $\vc{x}\in\AAA^k$ we have, 
with $\vc{s}=k\typ_k(\vc{x})$:
\begin{equation}
M_{k,\mu}(\vc{x})  =  \frac{1}{ \binom{k}{\vc{s}}}
\sum_{\vc{p}} B(n,k, n \vc{p}; \vc{s}) \mu(\vc{p}).
\label{eq:Mkrep}
\end{equation}

The following simple proposition clarifies
the connection between finite de Finetti theorems
and sampling bounds. Its proof is a simple 
consequence of~(\ref{eq:Pkrep}) combined
with~(\ref{eq:Mkrep}) and the 
log-sum inequality~\cite{cover:book2}.

\begin{proposition} \label{prop:mixing}
Suppose $\vc{X}_n=(X_1,\ldots,X_n)$ is a finite
exchangeable sequence of random variables with
values in $\AAA$, and
let the $\mu$ denote the law of the empirical measure 
defined by~\eqref{eq:empirical}.
Then for any $k\leq n$, we have,
\begin{equation*} 
D( P_k \| M_{k,\mu}) 
\leq \sum_{\vc{p}} \mu(\vc{p}) D \left( n,k, n \vc{p} \right)
\leq \max_{\frac{\vc{\ell}}{n} \in \supp(\mu)}
D \left( n,k, \vc{\ell} \right),
\end{equation*}
where $M_{k,\mu}$ is the mixture defined in~{\em (\ref{eq:mixture})}
and the maximum is taken over all p.m.f.s of
the form $\vc{\ell}/n$ in the support of $\mu$.
\end{proposition}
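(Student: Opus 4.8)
The plan is to derive the first inequality from the representations~\eqref{eq:Pkrep} and~\eqref{eq:Mkrep} via the log-sum inequality, and then obtain the second inequality by bounding the resulting average by its maximum.

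\textbf{Step 1: Set up the relative entropy as a sum over types.} Since both $P_k(\vc{x})$ and $M_{k,\mu}(\vc{x})$ depend on $\vc{x}\in\AAA^k$ only through its type $\vc{s}=k\typ_k(\vc{x})$, I would group the sum defining $D(P_k\|M_{k,\mu})$ by type: there are exactly $\binom{k}{\vc{s}}$ sequences of each type $\vc{s}$, so
\[
D(P_k\|M_{k,\mu})=\sum_{\vc{s}}\binom{k}{\vc{s}}\,P_k(\vc{x}_{\vc{s}})\log\frac{P_k(\vc{x}_{\vc{s}})}{M_{k,\mu}(\vc{x}_{\vc{s}})},
\]
where $\vc{x}_{\vc{s}}$ is any representative sequence of type $\vc{s}/k$. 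Substituting~\eqref{eq:Pkrep} and~\eqref{eq:Mkrep}, the factors $1/\binom{k}{\vc{s}}$ in numerator and denominator cancel inside the logarithm, and one is left with a ratio of the sums $\sum_{\vc{p}}H(n,k,n\vc{p};\vc{s})\mu(\vc{p})$ and $\sum_{\vc{p}}B(n,k,n\vc{p};\vc{s})\mu(\vc{p})$.

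\textbf{Step 2: Apply the log-sum inequality.} For each fixed $\vc{s}$, apply the log-sum inequality to the nonnegative numbers $a_{\vc{p}}=H(n,k,n\vc{p};\vc{s})\mu(\vc{p})$ and $b_{\vc{p}}=B(n,k,n\vc{p};\vc{s})\mu(\vc{p})$, which gives
\[
\Big(\sum_{\vc{p}}a_{\vc{p}}\Big)\log\frac{\sum_{\vc{p}}a_{\vc{p}}}{\sum_{\vc{p}}b_{\vc{p}}}\le\sum_{\vc{p}}a_{\vc{p}}\log\frac{a_{\vc{p}}}{b_{\vc{p}}}=\sum_{\vc{p}}\mu(\vc{p})\,H(n,k,n\vc{p};\vc{s})\log\frac{H(n,k,n\vc{p};\vc{s})}{B(n,k,n\vc{p};\vc{s})},
\]
since the $\mu(\vc{p})$ factors cancel inside the logarithm (and any term with $\mu(\vc{p})=0$ contributes nothing). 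Now multiply by $\binom{k}{\vc{s}}$, sum over $\vc{s}$, and interchange the order of summation. For each fixed $\vc{p}$ in the support of $\mu$, the inner sum $\sum_{\vc{s}}\binom{k}{\vc{s}}H(n,k,n\vc{p};\vc{s})\log(H/B)$ — recalling that $H(n,k,n\vc{p};\vc{s})$ is constant over the $\binom{k}{\vc{s}}$ sequences of type $\vc{s}/k$ — is exactly $D(n,k,n\vc{p})$ as defined in~\eqref{eq:defrelent}. This yields the first claimed inequality, $D(P_k\|M_{k,\mu})\le\sum_{\vc{p}}\mu(\vc{p})D(n,k,n\vc{p})$.

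\textbf{Step 3: Bound the average by the maximum.} The second inequality is immediate: $\sum_{\vc{p}}\mu(\vc{p})D(n,k,n\vc{p})$ is a convex combination (weighted by the probability measure $\mu$) of the quantities $D(n,k,n\vc{p})$ over $\vc{p}\in\supp(\mu)$, hence is at most $\max_{\vc{\ell}/n\in\supp(\mu)}D(n,k,\vc{\ell})$. The only point requiring minor care is the bookkeeping around vanishing coordinates: if some coordinate of $n\vc{p}$ is zero, the corresponding $D(n,k,n\vc{p})$ is interpreted (as noted in the excerpt after~\eqref{eq:defrelent}) as the divergence for the reduced color set, and the support restriction in the maximum is understood accordingly. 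I expect the main obstacle to be purely notational — carefully tracking the per-type constants $\binom{k}{\vc{s}}$ and the cancellation of the $\mu(\vc{p})$ weights inside the logarithms so that the log-sum inequality is applied to exactly the right quantities; the mathematical content is entirely contained in the two representation identities and the log-sum inequality.
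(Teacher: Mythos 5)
Your proof takes the same route as the paper: decompose $D(P_k\|M_{k,\mu})$ by type, substitute the representations~\eqref{eq:Pkrep} and~\eqref{eq:Mkrep}, apply the log-sum inequality per type, interchange the order of summation to recover $\sum_{\vc{p}}\mu(\vc{p})D(n,k,n\vc{p})$, and finally bound the average by the maximum. The one wrinkle is a bookkeeping slip in Step 3: having defined $a_{\vc{p}}=H(n,k,n\vc{p};\vc{s})\mu(\vc{p})$ (without the $\binom{k}{\vc{s}}^{-1}$ factor from~\eqref{eq:Pkrep}), the $\binom{k}{\vc{s}}$ coming from the number of sequences of each type already cancels against the $\binom{k}{\vc{s}}^{-1}$ in $P_k(\vc{x}_{\vc{s}})$, so the correct inner sum after interchanging is $\sum_{\vc{s}}H(n,k,n\vc{p};\vc{s})\log(H/B)$, not $\sum_{\vc{s}}\binom{k}{\vc{s}}H(n,k,n\vc{p};\vc{s})\log(H/B)$ as you wrote; the latter would over-count and does not equal $D(n,k,n\vc{p})$ as defined in~\eqref{eq:defrelent}. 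The paper sidesteps this by applying the log-sum inequality directly to $P_k(\vc{x})\log\bigl(P_k(\vc{x})/M_{k,\mu}(\vc{x})\bigr)$ for a single $\vc{x}$ and only then summing over the $\binom{k}{\vc{s}}$ sequences of each type, which keeps the combinatorial factors aligned automatically. Apart from this notational inconsistency, which is easily repaired, the argument is correct and coincides with the paper's.
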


\begin{proof}
Fix $\vc{s}$ with $\sum_{i=1}^c s_i = k$. The log-sum inequality gives 
that for any $\vc{x}$ of type $\typ_k(\vc{x}) = \vc{s}/k$:
\begin{align*}
P_k( \vc{x}) \log \left( \frac{ P_k( \vc{x})}{M_{k,\mu}(\vc{x})} \right)
& = 
	\binom{k}{\vc{s}}^{-1}
	\left( \sum_{\vc{p}} H(n,k, n \vc{p}; \vc{s}) \mu(\vc{p}) \right) 
	\log \left( \frac{ \sum_{\vc{p}} H(n,k, n \vc{p}; \vc{s}) \mu(\vc{p}) }
	{\sum_{\vc{p}} B(n,k, n \vc{p}; \vc{s}) \mu(\vc{p})} \right) 
	\\
& \leq  
	\binom{k}{\vc{s}}^{-1} \sum_{\vc{p}} \mu(\vc{p}) 
	H(n,k, n \vc{p}; \vc{s})  
	\log \left( \frac{ H(n,k, n \vc{p}; \vc{s}) }
	{B(n,k, n \vc{p}; \vc{s})} \right). 
\end{align*}
Hence, summing over the $\binom{k}{\vc{s}}$ vectors $\vc{x}$ of type 
$\typ_k(\vc{x}) = \vc{s}/k$ we obtain,
$$ \sum_{\vc{x}: \typ(\vc{x}) = \vc{s}/k} P_k( \vc{x}) \log \left( \frac{ P_k( \vc{x})}{M_{k,\mu}(\vc{x})} \right) 
 \leq  \sum_{\vc{p}} \mu(\vc{p}) H(n,k, n \vc{p}; \vc{s})  \log \left( \frac{ H(n,k, n \vc{p}; \vc{s}) }{B(n,k, n \vc{p}; \vc{s})} \right). $$
Finally, summing over $\vc{s}$ yields,
$$ D( P_k \| M_{k,\mu}) \leq \sum_{\vc{p}} \mu(\vc{p}) D \left( n,k, n \vc{p} \right),$$
and each of the relative entropy terms can be bounded by the 
maximum value.
\end{proof}

Combining Stam's bound~\eqref{eq:stam} and Proposition~\ref{prop:mixing} 
immediately gives:

\begin{corollary}[Sharp finite de Finetti] 
\label{cor:defi}
Under the assumptions of Proposition~\ref{prop:mixing},
for $1\leq k\leq n$:
\begin{equation} 
\label{dFDbound}
D( P_k \| M_{k,\mu}) \leq 
  \frac{(c-1) k(k-1)}{2(n-1)(n-k+1)}.
\end{equation}
\end{corollary}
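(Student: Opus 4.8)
The plan is straightforward: Corollary~\ref{cor:defi} follows by chaining two ingredients that are already in hand. First I would invoke Proposition~\ref{prop:mixing}, which gives
\[
D(P_k \| M_{k,\mu}) \leq \max_{\vc{\ell}/n \in \supp(\mu)} D(n,k,\vc{\ell}).
\]
Then I would apply Stam's uniform bound~\eqref{eq:stam}, namely $D(n,k,\vc{\ell}) \leq \frac{(c-1)k(k-1)}{2(n-1)(n-k+1)}$, which holds for every $\vc{\ell}$ with nonnegative integer entries summing to $n$. Since the right-hand side of~\eqref{eq:stam} does not depend on $\vc{\ell}$ at all, the maximum over $\vc{\ell}/n \in \supp(\mu)$ is bounded by this same quantity, and the result follows immediately.

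The only point that needs a moment's care is the edge case where some component $\ell_r$ of a vector in the support of $\mu$ equals zero. Stam's bound as stated is uniform in $\vc{\ell}$, and as noted in the paper's introduction, a zero component merely reduces the effective number of colours, so~\eqref{eq:stam} (with $c$ replaced by the smaller count, hence an even smaller bound) still applies; thus no genuine obstacle arises there. I would simply remark that~\eqref{eq:stam} holds for all $\vc{\ell}$ and conclude.

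Honestly, there is no hard part: the work was done in proving Proposition~\ref{prop:mixing} (via the log-sum inequality applied to the representations~\eqref{eq:Pkrep} and~\eqref{eq:Mkrep}) and in Stam's original argument for~\eqref{eq:stam}. The corollary is a one-line composition of these two facts. If anything, the mild subtlety is purely expository — making sure the reader sees that the uniformity of~\eqref{eq:stam} in $\vc{\ell}$ is exactly what licenses passing from the $\vc{\ell}$-dependent maximum in Proposition~\ref{prop:mixing} to an $\vc{\ell}$-free bound. So the proof I would write is essentially: ``Immediate from Proposition~\ref{prop:mixing} and~\eqref{eq:stam}.''
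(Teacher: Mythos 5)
Your proposal is correct and is exactly the paper's argument: the corollary is stated in the text as an immediate consequence of combining Proposition~\ref{prop:mixing} with Stam's uniform bound~\eqref{eq:stam}. Nothing further is needed.
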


Pinsker's inequality~\eqref{eq:pinsker} and~\eqref{dFDbound} 
gives,
\begin{equation*}
 \| P_k - M_{k,\mu} \| \leq \sqrt{\frac{(c-1) k(k-1)}{2(n-1)(n-k+1)}}.
\end{equation*}
In the $k=o(n)$ regime this is in fact optimal,
in that it is of the same order as 
Diaconis and Freedman's~\cite{diaconis-freedman:80b}
bound,
\begin{equation} \label{eq:diaconis}
    \| P_k - M_{k,\mu} \| \leq \frac{ c k}{n},
\end{equation}
which they show is of optimal order in $k$ and $n$
when $k=o(n)$.

For certain applications, for example
to approximation schemes for minimization 
of specific polynomials~\cite{berta:22}, 
the dependence on the alphabet size is also of interest;
see also~\cite{gavalakis-arxiv:23} and the references therein. 
The lower bound~\eqref{eq:lowerbd} shows that the linear 
dependence on the alphabet size $c$ is optimal for the sampling problem, 
in the sense that any upper bound that holds for any $c$, $k$, and $n$, 
must have at least linear dependence on $c$. 
However, we do not know 
whether the linear dependence on the alphabet size $c$ is optimal 
for the de Finetti problem under optimal rates in $k,n$. 

Information-theoretic proofs of finite de Finetti theorems 
have been lately developed 
in~\cite{gavalakis:21,gavalakis-LNM:23,gavalakis-arxiv:23}. 
In~\cite{gavalakis:21}, the bound,
\begin{equation} \label{gkfirsteq}
D(P_{k}\|M_{k,\mu}) \leq \frac{5k^2\log n}{n-k},
\end{equation}
was obtained for binary sequences,
and in~\cite{gavalakis-LNM:23},
the weaker bound,
\begin{equation} \label{gksecondeq}
D(P_{k}\| M_{k,\mu})=
O\left(\Big(\frac{k}{\sqrt{n}}\Big)^{1/2}\log{\frac{n}{k}}\right),
\end{equation}
was established for finite-valued random variables.
Finally the sharper bound,
\begin{equation} \label{gkbeq}
D(P_{k}\|M_{k,\mu}) \leq \frac{k(k-1)}{2(n-k-1)}\log c,
\end{equation}
was derived in~\cite{gavalakis-arxiv:23}, where 
random variables with values in abstract spaces were also considered. 
Although the derivations of~\eqref{gkfirsteq}--\eqref{gkbeq} 
are interesting in that they employ purely information-theoretic
ideas and techniques, the actual bounds
are of strictly weaker rate than the sharp $O(k^2/n^2)$ rate 
we obtained in Corollary~\ref{cor:defi} via sampling bounds.

\medskip

\noindent
{\bf A question on monotonicity.} 
A significant development in the area of information-theoretic
proofs of probabilistic limit theorems was 
in 2004, when it was shown~\cite{artstein:04}
that the convergence in relative entropy established
by Barron in 1986~\cite{barron:clt} was in fact monotone.
In the present setting we observe that,
while the total variation 
bound~\eqref{eq:diaconis} of Diaconis 
and Freedman~\cite{diaconis-freedman:80b}  
$\| P_k - M_{k,\mu} \|\leq \frac{2 c k}{n}$ is nonincreasing in $n$, 
we do not know whether the total variation distance 
is itself monotone.

Writing $D( P_r \| M_{r,\mu}) = 
D((X_1,\ldots,X_r)\|(\tilde{X}_1,\ldots,\tilde{X}_r))$, 
where the vector $(\tilde{X}_1,\ldots,\tilde{X}_r)$ is distributed 
according to $M_{r,\mu}$, a direct application of the data processing 
inequality for the relative entropy~\cite{cover:book2}
gives that, for any integer $r, s \geq 0$:
$$ D( P_r \| M_{r,\mu}) \leq D( P_{r+s} \| M_{r+s,\mu}).$$
In other words, $D( P_k \| M_{k,\mu})$ is nondecreasing in $k$. Since the total variation is an $f$-divergence and therefore satisfies the data processing inequality, the same argument shows that
$\|P_k - M_{k,\mu}\|$
is also nondecreasing in $k$. In view of the monotonicity in the 
information-theoretic central limit theorem and the convexity properties 
of the relative entropy it is tempting to conjecture 
that $D( P_k \| M_{k,\mu})$ is 
{\em also nonincreasing in $n$}. However, this relative entropy depends 
on $n$ through the choice of the mixing measure $\mu$, which makes 
the problem significantly harder. 
We therefore ask:

{\em Let $\vc{X}_{n+1} = (X_1,\ldots,X_{n+1})$ be a finite exchangeable
random sequence and let the mixing measures $\mu_n, \mu_{n+1}$ be the laws 
of the empirical measures induced by $\vc{X}_n$ and $\vc{X}_{n+1}$,
respectively. For a fixed $k\leq n$, is 
$D( P_k \| M_{k,\mu_n}) \leq D( P_k \| M_{k,\mu_{n+1}})$? 
Are there possibly different measures
${\nu}_n$ such that $D( P_k \| M_{k,\nu_n})$ 
is vanishing for $k = o(n)$ and nonincreasing in $n$?}

\section{Upper bounds on relative entropy} \label{sec:entbd}

The proof of Theorem~\ref{thm:main} in this section
will be based on the 
decomposition of $D(n,k,\vc{\ell})$ as a sum 
of expectations of terms involving the quantity $U(a,b)$ 
introduced in Definition~\ref{def:Udef}. We tightly approximate 
these terms $U$ within a small additive error 
(see Proposition~\ref{prop:Ubd}), and control the 
expectations of the resulting terms using
Lemmas~\ref{lem:sumform1} and~\ref{lem:sumform3}.

\subsection{Hypergeometric properties}

We first briefly mention some standard properties of hypergeometric 
distributions that will help our analysis.

Notice that~\eqref{eq:MVhyp} and~\eqref{eq:multin} are both invariant 
under permutation of colour labels. That is,
 $H(n, k, \vc{\ell}; \vc{s}) = H(n, k, \wt{\vc{\ell}}; \wt{\vc{s}})$ 
and $B(n, k, \vc{\ell}; \vc{s}) = B(n, k, \wt{\vc{\ell}}; \wt{\vc{s}})$,
whenever $\wt{\ell}_i = \ell_{\pi(i)}$ and $\wt{s}_i = s_{\pi(i)}$ for
the same permutation $\pi$. Hence $D(n, k, \vc{\ell})$ 
from~\eqref{eq:defrelent} is itself permutation invariant, 
\begin{equation} 
\label{eq:swap}  
D(n,k, \vc{\ell}) = D(n,k, \wt{\vc{\ell}}),
\end{equation}
so we may assume, without loss of 
generality, that $\ell_1 \leq \ell_2 \leq \cdots \leq \ell_c$.
Similarly, if we swap the roles of balls that are `drawn' and `not drawn', 
then direct calculation using the second representation in~\eqref{eq:MVhyp} 
shows that,
\begin{equation} \label{eq:swap2}
H(n,n-k,\vc{\ell}; \vc{\ell}- \vc{s}) = \frac{\binom{n-k}{\vc{\ell} - \vc{s}} \binom{k}{\vc{s}}}{\binom{n}{\vc{\ell}}}
= H(n,k, \vc{\ell}; \vc{s}).\end{equation} 

Let $\vc{S}=(S_1,\ldots,S_c)\sim H(n,k,\vc{\ell};\cdot)$.
Then each $S_r$ has a multinomial distribution with $c=2$, 
with p.m.f.\ given by 
$\pr(S_r = s_r)=
H(n,k,\ell_r;s_r):= H(n,k, (\ell_r,n-\ell_r); (s_r,k-s_r))$.

We use standard notation and write $(x)_r =
x(x-1) \ldots (x-r+1) = x!/(x-r)!$ for the falling factorial, and note that on marginalizing we can apply a standard result:

\begin{lemma} \label{lem:fallmom} 
For any $i$ and $r$, the factorial moments of the
hypergeometric $H(n,k, \vc{\ell}; \cdot)$ satisfy:
\begin{equation*}
\ep[(S_i)_r] =
\sum_{\vc{s}} H(n,k, \vc{\ell}; \vc{s}) (s_i)_r  = 
\frac{(\ell_i)_r (k)_r}{(n)_r}. 
\end{equation*}
\end{lemma}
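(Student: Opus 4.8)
The plan is to reduce to the one-dimensional marginal and then invoke a standard combinatorial identity. As noted just above the statement, marginalizing the multivariate hypergeometric over all colours but the $i$th shows that $S_i \sim H(n,k,\ell_i;\cdot)$, i.e.\ $S_i$ is the (bivariate) hypergeometric count of colour-$i$ balls among $k$ drawn without replacement from an urn with $\ell_i$ balls of colour $i$ and $n-\ell_i$ others. So it suffices to prove that a single hypergeometric variable $S$ with $\pr(S=s) = \binom{\ell}{s}\binom{n-\ell}{k-s}/\binom{n}{k}$ satisfies $\ep[(S)_r] = (\ell)_r (k)_r/(n)_r$.

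First I would use the falling-factorial identity $(s)_r\binom{\ell}{s} = (\ell)_r\binom{\ell-r}{s-r}$, which holds for every $s$ (both sides vanish when $s<r$, and for $s\geq r$ it is an immediate cancellation of factorials). Substituting this into $\ep[(S)_r] = \binom{n}{k}^{-1}\sum_{s} (s)_r \binom{\ell}{s}\binom{n-\ell}{k-s}$ and reindexing via $t=s-r$ gives $\ep[(S)_r] = (\ell)_r \binom{n}{k}^{-1}\sum_{t} \binom{\ell-r}{t}\binom{n-\ell}{(k-r)-t}$. Next, Vandermonde's convolution $\sum_{t} \binom{\ell-r}{t}\binom{n-\ell}{(k-r)-t} = \binom{n-r}{k-r}$ collapses the sum, and the elementary computation $\binom{n-r}{k-r}/\binom{n}{k} = (k)_r/(n)_r$ then yields $\ep[(S)_r] = (\ell)_r(k)_r/(n)_r$, as required.

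There is essentially no serious obstacle here: this is a classical fact, and the only points needing a moment's care are checking that the identity $(s)_r\binom{\ell}{s}=(\ell)_r\binom{\ell-r}{s-r}$ is valid on the full summation range (including the degenerate cases where individual binomials vanish) and tracking the summation limits when reindexing so that Vandermonde applies verbatim. Alternatively, one can bypass the algebra entirely with a probabilistic argument: $(S_i)_r$ counts the ordered $r$-tuples of distinct draw positions all receiving colour $i$; there are $(k)_r$ such tuples, and by exchangeability of the draws each has probability $\ell_i(\ell_i-1)\cdots(\ell_i-r+1)/\big(n(n-1)\cdots(n-r+1)\big) = (\ell_i)_r/(n)_r$ of consisting entirely of colour-$i$ balls, so that $\ep[(S_i)_r] = (k)_r(\ell_i)_r/(n)_r$ by linearity of expectation.
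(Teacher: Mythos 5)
Your proposal is correct. The paper itself does not prove Lemma~\ref{lem:fallmom}: it simply states it as ``a standard result'' after noting that the marginal of the $i$th coordinate is a (bivariate) hypergeometric. You go further and supply two complete proofs, both valid. The reduction to the one-dimensional marginal, the identity $(s)_r\binom{\ell}{s}=(\ell)_r\binom{\ell-r}{s-r}$ (checked correctly on the full range, including the degenerate $s<r$ cases), the reindexing $t=s-r$, Vandermonde's convolution, and the final simplification $\binom{n-r}{k-r}/\binom{n}{k}=(k)_r/(n)_r$ are all correct. The alternative probabilistic argument---interpreting $(S_i)_r$ as a count of ordered $r$-tuples of distinct draw positions all receiving colour $i$, and using exchangeability together with linearity of expectation---is also correct and arguably more transparent, since it sidesteps the binomial-coefficient algebra entirely. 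Either argument fills in the step the paper takes for granted; no gaps.
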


Hence, as mentioned in the Introduction,
the mean $\ep(S_i) = k \ell_i/n$. In general, we write 
$M_r(S_i) = \ep[(S_i - k \ell_i/n)^r]$ for the $r$th centered moment 
of $S_i$ and note that, by expressing $(s_i - k \ell_i/n)^r$ as a linear 
combination of factorial moments of $s_i$, we obtain that,
\begin{eqnarray}
M_2(S_i) & = & \var(S_i) = \frac{k(n-k) \ell_i(n-\ell_i)}{n^2 (n-1)}, \label{eq:m2val} \\
M_3(S_i) & = & \frac{k \ell_i (n - k) (n - 2 k) (n-\ell_i) (n-  2\ell_i)}{n^3(n-1)(n-2)}. \label{eq:m3val}
\end{eqnarray}

\subsection{Proof of Theorem~\ref{thm:main}}

A key role in the proof will be played by the analysis of the following 
quantity, defined in terms of the gamma function $\Gamma$:

\begin{definition} \label{def:Udef}
For $0 \leq b \leq a$, define the function:
\begin{equation} \label{eq:Udef}
U(a, b) := \log \left( \frac{ a^b \Gamma(a-b+1)}{\Gamma(a+1)} \right).
\end{equation}
\end{definition}

Using the standard fact that $\Gamma(n+1) = n!$, when $a$ and $b$ are 
both integers we can write,
\begin{equation*}
U(a, b) := \log \left( \frac{ a^b (a-b)!}{a!}
\right) = \log \left( \frac{a^b}{a(a-1) \cdots (a-b+1)}
\right) \geq 0,
\end{equation*}
and note that $U(a,0) = U(a,1) \equiv 0$ for all $a$.
Using Stirling's approximation for the factorials suggests that 
$U(a,b) \sim (a-b+1/2) \log( (a-b)/a) + b$, but we can make this 
precise using results of Alzer~\cite{alzer:97}, for example.
Proposition~\ref{prop:Ubd} is proved in
Appendix~\ref{sec:newproofs}. 

\begin{proposition} \label{prop:Ubd}
Writing,
\begin{align*}
A(a,b) & := (a-b+1/2) \log \left( \frac{a-b}{a} \right) + b + \frac{1}{12(a-b)} - \frac{1}{12 a}, \\
\varepsilon(a,b) & :=  \frac{1}{360} \left( \frac{1}{(a-b)^3} - \frac{1}{a^3} \right) \geq 0,
\end{align*}
for any $0 \leq b \leq a-1$ we have the bounds:
\begin{equation} \label{eq:Ubd}
 A(a,b) - \varepsilon(a,b) \leq       U(a,b) \leq  A(a,b).
 \end{equation}
\end{proposition}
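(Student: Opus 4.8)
The plan is to prove Proposition~\ref{prop:Ubd} by reducing the quantity $U(a,b)$ to a difference of two applications of a sharp Stirling-type estimate for the logarithm of the gamma function. Recall that $U(a,b) = b\log a + \log\Gamma(a-b+1) - \log\Gamma(a+1)$, so if we write $g(x) := \log\Gamma(x+1)$, then $U(a,b) = b\log a + g(a-b) - g(a)$. Alzer's refinement of Stirling's formula (see~\cite{alzer:97}) provides two-sided bounds of the form
\begin{equation*}
\Big(x+\tfrac12\Big)\log x - x + \tfrac12\log(2\pi) + \frac{1}{12x} - \frac{1}{360x^3} \leq g(x) \leq \Big(x+\tfrac12\Big)\log x - x + \tfrac12\log(2\pi) + \frac{1}{12x},
\end{equation*}
valid for $x > 0$ (this is the standard statement that the Stirling series is enveloping, so truncating after the $1/(12x)$ term overestimates and truncating after the $-1/(360x^3)$ term underestimates). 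First I would state this precisely, citing Alzer, with attention to the edge case $a-b \geq 1$ which is exactly the hypothesis $0 \leq b \leq a-1$ ensuring the argument $x = a-b$ stays in the valid range; the case $b = 0$ and $b=1$ must also be checked to be consistent with $U(a,0) = U(a,1) = 0$, but those follow by direct substitution and are not obstacles.

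The main computation is then bookkeeping. To get the upper bound $U(a,b) \leq A(a,b)$, I would apply the upper Stirling bound to $g(a-b)$ and the lower Stirling bound to $g(-a)$... no: $g(a)$, with its \emph{lower} bound, i.e. use $g(a) \geq (a+\tfrac12)\log a - a + \tfrac12\log(2\pi) + \frac{1}{12a} - \frac{1}{360a^3}$. Subtracting and adding $b\log a$, the $-\tfrac12\log(2\pi)$ terms cancel, the linear-in-$a$ and linear-in-$(a-b)$ terms combine to give exactly $+b$, and the logarithmic terms combine to $(a-b+\tfrac12)\log(a-b) - (a+\tfrac12)\log a = (a-b+\tfrac12)\log\frac{a-b}{a} + (a-b+\tfrac12)\log a - (a+\tfrac12)\log a = (a-b+\tfrac12)\log\frac{a-b}{a} - b\log a$, which after adding back the $b\log a$ gives the leading term of $A(a,b)$. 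The correction terms from $g(a-b)$ upper and $g(a)$ lower give $\frac{1}{12(a-b)} - \frac{1}{12a} + \frac{1}{360 a^3}$, which is slightly larger than the $\frac{1}{12(a-b)} - \frac{1}{12a}$ appearing in $A(a,b)$; so I need the observation that $\frac{1}{360a^3} \geq 0$ to conclude $U(a,b) \leq A(a,b)$. For the lower bound $U(a,b) \geq A(a,b) - \varepsilon(a,b)$ I would symmetrically use the \emph{lower} Stirling bound for $g(a-b)$ and the \emph{upper} Stirling bound for $g(a)$; the correction terms then become $\frac{1}{12(a-b)} - \frac{1}{12a} - \frac{1}{360(a-b)^3}$, which is exactly the $A(a,b)$ correction minus $\frac{1}{360(a-b)^3} \geq A(a,b)$ correction minus $\varepsilon(a,b) - \frac{1}{360a^3}$; in fact $\frac{1}{12(a-b)} - \frac{1}{12a} - \frac{1}{360(a-b)^3} \geq \frac{1}{12(a-b)} - \frac{1}{12a} - \varepsilon(a,b)$ since $\varepsilon(a,b) = \frac{1}{360(a-b)^3} - \frac{1}{360 a^3} \leq \frac{1}{360(a-b)^3}$, which closes the lower bound. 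I would also record that $\varepsilon(a,b) \geq 0$ because $a - b \leq a$.

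I do not expect a serious obstacle here; the only points requiring care are (i) verifying that Alzer's bounds hold in the exact form needed and at the relevant arguments (in particular that both $a$ and $a-b$ are $\geq 1$, which is guaranteed by $0 \leq b \leq a-1$ together with integrality when $a,b$ are integers — though in the generality of Definition~\ref{def:Udef} one should note $a \geq 1$ is implied since $a - b \geq $ something positive, and state the real-variable version of Alzer's inequality which is valid for all $x>0$), and (ii) the sign chasing in matching the residual $\frac{1}{360}$-order terms to the stated $\varepsilon(a,b)$ and $0$. The mild subtlety worth flagging is that $A(a,b)$ as defined keeps only the $\frac{1}{12}$-order correction, so the $\frac{1}{360}$-order terms are precisely what is absorbed into $\varepsilon$ on one side and discarded (using nonnegativity) on the other — this asymmetry is why the proposition is one-sided-tight rather than sandwiching $U$ between $A \pm \varepsilon$.
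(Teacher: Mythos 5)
Your approach is conceptually close to the paper's, in that both rely on Alzer-type refinements of Stirling's formula, but there is a genuine gap in the sign-chasing step you flag as item~(ii), and it breaks both directions of the inequality.

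Concretely: writing $g(x) = \log\Gamma(x+1)$ and applying the two-sided Stirling bounds \emph{separately} to $g(a-b)$ and $g(a)$ is strictly lossier than what the proposition asserts. For the upper bound, you take the Stirling \emph{upper} bound at $a-b$ and the Stirling \emph{lower} bound at $a$, which (after the cancellations you correctly carry out) yields $U(a,b) \leq A(a,b) + \tfrac{1}{360a^3}$. You then say that ``since $\tfrac{1}{360a^3}\geq 0$'' this gives $U(a,b)\leq A(a,b)$, but that is the wrong direction: $U\leq A + (\text{positive})$ does not imply $U\leq A$. The same thing happens on the lower side: you obtain $U(a,b)\geq A(a,b) - \tfrac{1}{360(a-b)^3}$, and you want $U(a,b)\geq A(a,b) - \varepsilon(a,b)$. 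But $\varepsilon(a,b) = \tfrac{1}{360(a-b)^3} - \tfrac{1}{360a^3} < \tfrac{1}{360(a-b)^3}$, so $A - \varepsilon > A - \tfrac{1}{360(a-b)^3}$, and the inequality you derived is \emph{weaker} than (sits strictly below) the claimed threshold. Your line ``since $\varepsilon(a,b) \leq \tfrac{1}{360(a-b)^3}$, which closes the lower bound'' actually supplies the inequality in the wrong direction. In total your argument proves only $A(a,b) - \tfrac{1}{360(a-b)^3} \leq U(a,b) \leq A(a,b) + \tfrac{1}{360a^3}$, a strictly wider sandwich than~\eqref{eq:Ubd}.

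The reason the paper gets the sharper statement is that it works one level lower, with the digamma function. Using $\Gamma(x+1)=x\,\Gamma(x)$ one has
\[
U(a,b) = (b-1)\log a + \log(a-b) + \int_{a-b}^{a} \bigl(-\psi(x)\bigr)\,dx,
\]
and then a \emph{one-sided, uniform-in-$x$} bound on the integrand over the interval $[a-b,a]$ is integrated. Because the same one-sided bound is applied throughout the interval, the leading Stirling terms at the two endpoints cancel exactly and no spurious $\tfrac{1}{360a^3}$ appears: the upper bound comes out to exactly $A(a,b)$, and the error in the other direction is $\int_{a-b}^a \tfrac{1}{120x^4}\,dx = \varepsilon(a,b)$, giving the proposition precisely. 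Equivalently, in your notation, what you are missing is that the residual $R(x):=(x+\tfrac12)\log x - x + \tfrac12\log(2\pi)+\tfrac{1}{12x} - g(x) \geq 0$ is \emph{monotone decreasing}, so $R(a-b)\geq R(a)$ and the two endpoint errors have a favorable sign that you cannot see by treating them independently. That monotonicity is exactly the digamma inequality, so if you want to rescue your approach you are, in effect, forced back to the paper's argument; the two-sided $\log\Gamma$ bounds alone are not enough.

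A small further note: your hypotheses discussion is fine (the condition $0\leq b\leq a-1$ keeps $a-b\geq 1$ so the Alzer bounds apply), but the paper does not actually restrict to integer $a,b$ here and the integral form handles the real case uniformly.
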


\begin{proof}[Proof of Theorem~\ref{thm:main}]
We can give an expression for the relative 
entropy $D(n,k, \vc{\ell})$ by using the second form 
of~\eqref{eq:MVhyp} and the fact that $\sum_{i=1}^c s_i =k$, to obtain,
\begin{eqnarray*}
 \log \left( \frac{ H(n,k, \vc{\ell}; \vc{s})}{B(n,k, \vc{\ell}; \vc{s})} \right)
&  = &
 \log \left( \frac{\binom{n-k}{\vc{\ell} - \vc{s}}}{\binom{n}{\vc{\ell}}}
\prod_{i=1}^c  \frac{1}{(\ell_i/n)^{s_i}} \right) \nonumber \\
& = & \log \left( \frac{ n^k (n-k)!}{n!} \right) + \sum_{i=1}^c \log \left( \frac{ \ell_i!}{\ell_i^{s_i} (\ell_i - s_i)! } \right)   \nonumber \\
& = & U(n,k) - \sum_{i=1}^c U(\ell_i, s_i),
\end{eqnarray*}
so that we can express,
\begin{equation}
D \left( n,k, \vc{\ell} \right) 
 =  U(n,k) -\sum_{i=1}^c \ep U(\ell_i, S_i),
\label{eq:relentsumform}
\end{equation}
where $S_i$ has a hypergeometric $H(n,k, \ell_i;\cdot)$ 
distribution.\footnote{Note 
that since, $S_i \leq \min(k, \ell_i)$, in the case $k=1$ all of the values 
involved in this calculation are of the form $U(a,0)$ or $U(a,1)$, 
so~\eqref{eq:relentsumform} is identically zero as we would expect -- 
we only sample one item, so it does not matter whether that 
is with or without replacement.}
Now, we can approximate $\ep U(\ell_i, S_i)$ by  
$U(\ell_i, \ep S_i) = U(\ell_i, \ell_i k/n)$, to rewrite:
\begin{equation}
D \left( n,k, \vc{\ell} \right) 
 =  \Big( U(n,k) - \sum_{i=1}^c U(\ell_i, \ell_i k/n) \Big) 
+ \sum_{i=1}^c  \left( - \ep U(\ell_i, S_i) +
 U(\ell_i, \ell_i k/n)   \right).
\label{eq:relentsumforma}
\end{equation}
We bound the two parts of~\eqref{eq:relentsumforma} 
in Lemmas~\ref{lem:sumform1} and~\ref{lem:sumform3}, respectively. 
The result follows on combining these two expressions, 
noting that the result also includes the negative term,
\begin{equation*} 
\frac{k}{12 n(n-k)} - \frac{ c k }{4(n-k)(n-1)} =
- \frac{k (3 c n - n +1)}{12 n(n-1)(n-k)} 
\leq - \frac{k (3 c-1)}{12 (n-1)(n-k)} <0,
\end{equation*}
which may be ignored.
\end{proof}

Lemmas~\ref{lem:sumform1} and~\ref{lem:sumform3}
are proved in Appendix~\ref{sec:newproofs2}. 

\begin{lemma} \label{lem:sumform1}
The first term of~\eqref{eq:relentsumforma}, 
$U(n,k) - \sum_{i=1}^c U(\ell_i, \ell_i k/n)$,
is bounded above by:
\begin{eqnarray*}
\frac{c-1}{2} \log \Big( \frac{n}{n-k} \Big) 
+ \frac{k}{12 n(n-k)} \Big( 1 - \sum_{i=1}^c \frac{n}{\ell_i} \Big) 
+ \frac{1}{360} \Big( \frac{n^3}{(n-k)^3}  - 1 \Big) \sum_{i=1}^c \frac{1}{\ell_i^3}.
\end{eqnarray*}
\end{lemma}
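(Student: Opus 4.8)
The plan is to pass from $U$ to its Stirling-type surrogate $A$ using Proposition~\ref{prop:Ubd}, and then to check that the resulting expression collapses to the claimed bound by elementary algebra. Concretely, I would apply the \emph{upper} bound $U(n,k)\le A(n,k)$ and, for each $i$, the \emph{lower} bound $U(\ell_i,\ell_i k/n)\ge A(\ell_i,\ell_i k/n)-\varepsilon(\ell_i,\ell_i k/n)$, which together give
\begin{equation*}
U(n,k) - \sum_{i=1}^c U(\ell_i,\ell_i k/n) \;\le\; A(n,k) - \sum_{i=1}^c A(\ell_i,\ell_i k/n) + \sum_{i=1}^c \varepsilon(\ell_i,\ell_i k/n),
\end{equation*}
so it only remains to evaluate the right-hand side in closed form.

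The key structural observation is that, writing $b_i:=\ell_i k/n$, one has $\ell_i-b_i=\ell_i(n-k)/n$, so the ratio $(\ell_i-b_i)/\ell_i=(n-k)/n$ does \emph{not} depend on $i$. Hence every logarithm occurring in $\sum_i A(\ell_i,b_i)$ equals $\log((n-k)/n)$, and, using $\sum_i\ell_i=n$ and $\sum_i b_i=k$, the sum telescopes: the coefficient of $\log((n-k)/n)$ in $\sum_i A(\ell_i,b_i)$ is $\sum_i(\ell_i-b_i+1/2)=(n-k)+c/2$, against $n-k+1/2$ in $A(n,k)$, so the difference contributes the term $\tfrac{c-1}{2}\log(n/(n-k))$; the linear ``$+\,b$'' pieces cancel since $k-\sum_i b_i=0$; and the remaining $\tfrac{1}{12}(\cdot)$ terms combine, via $\sum_i 1/(\ell_i-b_i)=\tfrac{n}{n-k}\sum_i 1/\ell_i$, into
\begin{equation*}
\frac{1}{12(n-k)} - \frac{1}{12n} - \frac{k}{12(n-k)}\sum_{i=1}^c\frac{1}{\ell_i} \;=\; \frac{k}{12n(n-k)}\Big(1 - \sum_{i=1}^c\frac{n}{\ell_i}\Big).
\end{equation*}
Thus $A(n,k)-\sum_i A(\ell_i,b_i)$ is exactly the first two terms of the claimed bound. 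In the same way $\varepsilon(\ell_i,b_i)=\tfrac{1}{360}\big(1/(\ell_i-b_i)^3-1/\ell_i^3\big)=\tfrac{1}{360\ell_i^3}\big(n^3/(n-k)^3-1\big)$, so that $\sum_i\varepsilon(\ell_i,b_i)=\tfrac{1}{360}\big(n^3/(n-k)^3-1\big)\sum_i 1/\ell_i^3$, which is the third term; adding the three pieces finishes the proof.

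The only genuine subtlety is verifying the hypothesis $0\le b\le a-1$ of Proposition~\ref{prop:Ubd} in each application. At $(n,k)$ it holds because $k\le n/2\le n-1$. At $(\ell_i,b_i)$ with $b_i\le\ell_i/2$ it holds precisely when $\ell_i\ge 2$, so the borderline case $\ell_i=1$ — which is allowed by the hypotheses — must be handled separately: one checks directly that $U(1,k/n)=\log\Gamma(2-k/n)\ge A(1,k/n)-\varepsilon(1,k/n)$ for $0<k/n\le 1/2$, which (the inequality being rather tight here) follows from the enveloping, alternating-remainder property of Stirling's series for $\log\Gamma$ at the arguments $2-k/n$ and $2$ — the same fact underlying Proposition~\ref{prop:Ubd}. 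I expect this boundary verification, rather than the telescoping algebra, to be the main obstacle; everything else is bookkeeping.
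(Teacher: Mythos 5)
Your argument is essentially the paper's proof: apply the two-sided bound of Proposition~\ref{prop:Ubd} in the favourable directions, observe that $(\ell_i-\ell_ik/n)/\ell_i=(n-k)/n$ is independent of $i$, and then telescope using $\sum_i\ell_i=n$. Your algebra checks out and reproduces the displayed identity $A(n,k)-\sum_iA(\ell_i,\ell_ik/n)=\tfrac{c-1}{2}\log(n/(n-k))+\tfrac{k}{12n(n-k)}(1-\sum_in/\ell_i)$ that the paper derives, as well as the $\varepsilon$ sum. So the approach and the bookkeeping match the paper's argument.

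The one place you diverge is in flagging the boundary case $\ell_i=1$, where the \emph{stated} hypothesis $0\le b\le a-1$ of Proposition~\ref{prop:Ubd} would require $k/n\le 0$. You are right that the paper glosses over this, and it is a genuine (if minor) gap in the exposition. However, your proposed fix is more complicated than needed and is itself only sketched. The cleaner repair is to note that the proof of Proposition~\ref{prop:Ubd}, via the integral representation $U(a,b)=(b-1)\log a+\log(a-b)-\int_{a-b}^a\psi(x)\,dx$ together with Lemma~\ref{lem:psibd} (valid for all $y>0$), actually holds under the weaker hypothesis $0\le b<a$, since all that is required is $a-b>0$. Under that reading, $a=\ell_i=1$, $b=k/n\le 1/2<1$ is covered directly, and no separate check is needed. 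So the mathematics is sound; the statement of Proposition~\ref{prop:Ubd} is just a touch conservative, and you would do better to widen its hypothesis (with a one-line justification) than to run a bespoke Stirling-remainder argument at $\ell_i=1$.
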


\begin{lemma} \label{lem:sumform3}
The second term in~\eqref{eq:relentsumforma} is bounded
above as:
$$\sum_{i=1}^c  \left(- \ep U(\ell_i, S_i) +
 U(\ell_i, \ell_i k/n)   \right) 
\leq  - \frac{ k (c-1)}{2(n-1)} + \frac{k}{4(n-k)(n-1)} \sum_{i=1}^c \left( \frac{n}{\ell_i} -1 \right).
$$
\end{lemma}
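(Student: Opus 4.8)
The plan is to invoke Proposition~\ref{prop:Ubd} to replace each $U(\ell_i,\cdot)$ by the explicit function $A(\ell_i,\cdot)$ — pushing the error terms $\varepsilon$ onto the favourable side — and then to Taylor expand $b\mapsto A(\ell_i,b)$ around the mean $\mu_i:=\ep S_i=\ell_i k/n$. Since $U(\ell_i,\ell_i k/n)\le A(\ell_i,\ell_i k/n)$ and $U(\ell_i,S_i)\ge A(\ell_i,S_i)-\varepsilon(\ell_i,S_i)$ pointwise, it is enough to bound
$$\sum_{i=1}^{c}\Big[A(\ell_i,\mu_i)-\ep A(\ell_i,S_i)+\ep\,\varepsilon(\ell_i,S_i)\Big].$$
Writing $A',A'',A''',A''''$ for the derivatives of $A(a,b)$ in its second argument, a direct computation gives $A''(\ell_i,b)=\tfrac{1}{\ell_i-b}-\tfrac{1}{2(\ell_i-b)^2}+\tfrac{1}{6(\ell_i-b)^3}$, and one checks that $A'',A''',A''''$ are all strictly positive for $b<\ell_i$ (each is a positive constant times a quadratic in $\ell_i-b$ — of negative discriminant — over a power of $\ell_i-b$).

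For the main term I would expand $A(\ell_i,S_i)$ about $\mu_i$ to fourth order with Lagrange remainder and take expectations: the linear term vanishes since $\mu_i=\ep S_i$, the fourth-order remainder is nonnegative by positivity of $A''''$, and therefore
$$A(\ell_i,\mu_i)-\ep A(\ell_i,S_i)\ \le\ -\tfrac12\,A''(\ell_i,\mu_i)\,M_2(S_i)\ -\ \tfrac16\,A'''(\ell_i,\mu_i)\,M_3(S_i),$$
with $M_2,M_3$ the centred moments in~\eqref{eq:m2val}--\eqref{eq:m3val}. Since $\ell_i-\mu_i=\ell_i(n-k)/n$, the first two terms of $A''(\ell_i,\mu_i)$ are $\tfrac{n}{\ell_i(n-k)}-\tfrac{n^2}{2\ell_i^2(n-k)^2}$; multiplying by $M_2(S_i)=\tfrac{k(n-k)\ell_i(n-\ell_i)}{n^2(n-1)}$ and summing over $i$, using $\sum_i(n-\ell_i)=(c-1)n$, produces exactly the two terms $-\tfrac{k(c-1)}{2(n-1)}$ and $\tfrac{k}{4(n-k)(n-1)}\sum_i(n/\ell_i-1)$ claimed in the lemma.

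It then remains to check that all the leftover pieces are nonpositive. The remaining $\tfrac{1}{6(\ell_i-\mu_i)^3}$ part of $-\tfrac12 A''(\ell_i,\mu_i)M_2(S_i)$ is already negative; the cubic terms $-\tfrac16 A'''(\ell_i,\mu_i)M_3(S_i)$ are nonpositive for every colour with $\ell_i\le n/2$, because $M_3(S_i)$ carries the factor $(n-2k)(n-2\ell_i)\ge0$ when $k\le n/2$ (see~\eqref{eq:m3val}), and after ordering $\ell_1\le\cdots\le\ell_c$ at most the colour $c$ can have $\ell_c>n/2$, where $A'''(\ell_c,\mu_c)$ is of order $n^2/(\ell_c^2(n-k)^2)$ and small; finally each $\ep\,\varepsilon(\ell_i,S_i)=\tfrac{1}{360}\big(\ep[(\ell_i-S_i)^{-3}]-\ell_i^{-3}\big)$ is $O(\ell_i^{-3})$ and is dominated by the negative term $\tfrac{1}{6(\ell_i-\mu_i)^3}$ already set aside. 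Because Lemma~\ref{lem:fallmom} lets us write every expectation in closed form, this reduces to a short list of elementary inequalities in $n,k,\ell_i$.

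The main obstacle — and the reason this bound, while correct, is not sharp for small colours — is the boundary: if $\ell_i\le k$ then $S_i=\ell_i$ with positive probability, and both $A(\ell_i,\ell_i)$ and $\varepsilon(\ell_i,\ell_i)$ equal $+\infty$, so Proposition~\ref{prop:Ubd} does not apply there; similarly $U(\ell_i,\ell_i k/n)$ falls outside the range $0\le b\le a-1$ of Proposition~\ref{prop:Ubd} once $\ell_i$ is small (e.g.\ $\ell_i=1$). These exceptional colours must be treated by hand. For $\ell_i=1$, $H$ is Bernoulli$(k/n)$ and $B$ is Binomial$(k,1/n)$, so $-\ep U(1,S_1)+U(1,k/n)=\log\Gamma(2-k/n)$, which one compares directly with its allotted share $-\tfrac{k}{2n}+\tfrac{k}{4(n-k)}$ of the right-hand side; for the other small colours a similar explicit estimate — or simply the Jensen bound $-\ep U(\ell_i,S_i)+U(\ell_i,\mu_i)\le0$ where it is already enough — shows they stay within their (comparatively generous, because of the $n/\ell_i$ factor) budget. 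Making the constants from the generic Taylor estimate and from these boundary cases line up, while using $k\le n/2$ at exactly the right moments, is the delicate part of the argument.
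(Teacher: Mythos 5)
Your proposal takes a related but materially different route from the paper, and there is a genuine gap in it. The paper's proof does \emph{not} go through Proposition~\ref{prop:Ubd} at this stage. Instead, Lemma~\ref{lem:sumform2} exploits the exact cancellation
$-\ep U(\ell_i,S_i)+U(\ell_i,\ell_i k/n)=-\ep\log\Gamma(\ell_i-S_i+1)+\log\Gamma(\ell_i-\ell_i k/n+1)$
(the $s\log\ell_i$ and $\log\Gamma(\ell_i+1)$ terms cancel because $\ep S_i=\ell_i k/n$), and Taylor-expands $-\log\Gamma(\ell_i-s+1)$ about the mean, using $\psi'''\ge0$ to drop the remainder. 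This is well-defined and valid for every $s\in\{0,\ldots,\ell_i\}$, since $\ell_i-s+1\ge1$; in particular it works even when $S_i=\ell_i$ occurs with positive probability, i.e.\ when $\ell_i\le k$. The $\psi'$ bound from~\cite{alzer:97} then yields exactly the two displayed terms, and the $M_3$ contributions are disposed of collectively by showing $\sum_i M_3(S_i)\ge0$ and $\psi''(\ell_i(1-k/n)+1)\le\psi''(\ell_c(1-k/n)+1)<0$.

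The gap in your version is precisely the boundary behaviour you flag but do not resolve. Replacing $U$ by $A\pm\varepsilon$ is only legal for $0\le b\le a-1$; when $\ell_i\le k$ the event $\{S_i=\ell_i\}$ has positive probability, and both $A(\ell_i,\ell_i)$ and $\varepsilon(\ell_i,\ell_i)$ are ill-defined, so $\ep A(\ell_i,S_i)$ and $\ep\varepsilon(\ell_i,S_i)$ are not finite quantities you can push around. You then suggest handling the small colours ``by hand'', for instance by Jensen's inequality $-\ep U(\ell_i,S_i)+U(\ell_i,\mu_i)\le0$, but this is not strong enough: the per-colour share of the right-hand side of the lemma, namely
$-\tfrac{k(n-\ell_i)}{2n(n-1)}+\tfrac{k}{4(n-1)(n-k)}(n/\ell_i-1)$,
is itself \emph{strictly negative} whenever $k\le n/2$ (e.g.\ for $\ell_i=1$ it equals $\tfrac{k(2k-n)}{4n(n-k)}<0$), so you must show the contribution is negative by a definite margin, not merely nonpositive. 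Your treatment of the single colour with $\ell_c>n/2$ is also incomplete: you assert the cubic term there is ``small'', whereas the paper actually proves the full $M_3$ contribution is nonpositive by showing $\sum_i M_3(S_i)\ge0$ via the concavity of $f(\ell)+f(n-\ell)$ with $f(\ell)=\ell(n-\ell)(n-2\ell)$. The moral is that expanding $\log\Gamma$ directly, rather than the Stirling approximation $A$, is not merely a stylistic choice here: it is what makes the argument uniform across all $\vc{\ell}$.
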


\newpage

\appendix
\section{Appendix: Proofs} 
\subsection{Proof of Proposition~\ref{prop:Ubd}}
\label{sec:newproofs}

The key to the proof is to work with the logarithmic derivative of the 
gamma function $\psi(x) := \frac{d}{d x} \log \Gamma(x)$, $x>0$. 
For completeness, we state and prove the following standard bound on $\psi$:

\begin{lemma} \label{lem:psibd} For any $y > 0$:
\begin{equation*}
0 \leq \psi(y) - \log y + \frac{1}{2y} + \frac{1}{12y^2}
\leq \frac{1}{120 y^4}.
\end{equation*}
\end{lemma}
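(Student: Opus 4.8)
The plan is to deduce the lemma from the classical Binet integral representation of the digamma function,
\[
\psi(y) = \log y - \frac{1}{2y} - 2\int_0^\infty \frac{u\,du}{(u^2+y^2)(e^{2\pi u}-1)},\qquad y>0,
\]
by rewriting the middle quantity $\psi(y)-\log y+\frac{1}{2y}+\frac{1}{12y^2}$ as a single integral with a manifestly nonnegative integrand that is also easy to bound from above.

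First I would record the two elementary evaluations $\int_0^\infty \frac{u\,du}{e^{2\pi u}-1}=\frac{1}{24}$ and $\int_0^\infty \frac{u^3\,du}{e^{2\pi u}-1}=\frac{1}{240}$; both follow from $\int_0^\infty \frac{t^{s-1}}{e^t-1}\,dt=\Gamma(s)\zeta(s)$ after the substitution $t=2\pi u$, using $\zeta(2)=\pi^2/6$ and $\zeta(4)=\pi^4/90$. The first identity says precisely that $\frac{1}{12y^2}=2\int_0^\infty \frac{u}{y^2(e^{2\pi u}-1)}\,du$, so adding this to Binet's formula gives
\[
\psi(y)-\log y+\frac{1}{2y}+\frac{1}{12y^2}
=2\int_0^\infty u\Big(\frac{1}{y^2}-\frac{1}{u^2+y^2}\Big)\frac{du}{e^{2\pi u}-1}
=2\int_0^\infty \frac{u^3}{y^2(u^2+y^2)}\cdot\frac{du}{e^{2\pi u}-1}.
\]
The lower bound is then immediate, since the integrand is nonnegative for every $u>0$. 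For the upper bound I would simply discard the $u^2$ in the denominator, bounding $\frac{u^3}{y^2(u^2+y^2)}\le\frac{u^3}{y^4}$, which gives
\[
\psi(y)-\log y+\frac{1}{2y}+\frac{1}{12y^2}\le \frac{2}{y^4}\int_0^\infty \frac{u^3\,du}{e^{2\pi u}-1}=\frac{2}{240\,y^4}=\frac{1}{120\,y^4},
\]
as required.

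I do not expect a real obstacle here: the only delicate points are quoting Binet's representation in the correct form and getting the normalising constants of the two $\zeta$-integrals right, both of which are routine. If one wished to avoid invoking Binet's formula, an alternative would be to use the recursion $\psi(y+1)-\psi(y)=1/y$: writing $f(y)$ for the middle quantity and $g(y)=\frac{1}{120y^4}-f(y)$, one would show $f(y)\ge f(y+1)$ and $g(y)\ge g(y+1)$ for all $y>0$, which together with $f(y),g(y)\to 0$ as $y\to\infty$ forces $f,g\ge 0$; this reduces the whole lemma to the single one-variable inequality $0\le h(y)\le \frac{1}{120}\big(y^{-4}-(y+1)^{-4}\big)$ for $h(y)=\log(1+1/y)-\frac1y+\frac{1}{2y(y+1)}+\frac{2y+1}{12y^2(y+1)^2}$, but this route is computationally messier and I would prefer the integral argument.
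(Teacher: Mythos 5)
Your proof is correct. You obtain the exact integral representation
\[
\psi(y)-\log y+\frac{1}{2y}+\frac{1}{12y^2}=2\int_0^\infty \frac{u^3}{y^2(u^2+y^2)}\,\frac{du}{e^{2\pi u}-1},
\]
from which both bounds fall out immediately (nonnegativity of the integrand for the lower bound, and $u^2+y^2\ge y^2$ plus $\int_0^\infty u^3(e^{2\pi u}-1)^{-1}du=1/240$ for the upper bound). The normalising constants are right, and the zeta evaluations you quote check out.

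The route is genuinely different from the paper's. The paper does not touch Binet's formula: it starts one derivative up, quoting Alzer's two-sided bound on $-\psi'(x)+1/x$ (his Theorem~9 with $k=1$), and recovers the statement by writing $\psi(y)-\log y$ as $\int_y^\infty(-\psi'(x)+1/x)\,dx$ and integrating the bounds term by term. That argument is shorter on the page because the heavy lifting is outsourced to a cited reference, and it meshes naturally with the rest of the paper, which repeatedly invokes the same Alzer bounds on $\psi'$, $\psi''$, $\psi'''$. Your argument is more self-contained — it derives the bound from first principles given the classical Binet representation and two standard $\zeta$-integrals — and it has the aesthetic advantage of exhibiting the middle quantity as a single manifestly nonnegative integral, so the lower bound costs nothing. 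The trade-off is that you have to quote Binet's formula in the right normalisation and get two zeta constants right, whereas the paper only has to integrate a polynomial. Either proof would serve; if you wanted to match the paper's style you could note that your same Binet representation, differentiated, also yields the Alzer-type bounds on $\psi'$, $\psi''$, $\psi'''$ used elsewhere in Section~\ref{sec:newproofs2}.
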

\begin{proof}
Note first that, by~\cite[Eq.~(2.2)]{alzer:97} for example, 
$$-\frac{1}{y} \leq \psi(y) - \log(y) \leq -\frac{1}{2y},$$
meaning that $\psi(y) - \log(y) \rightarrow 0$ as $y \rightarrow \infty$, so we can write:
\begin{equation} \label{eq:psibds}
\psi(y) - \log(y) = \int_y^\infty \left( - \psi'(x) + \frac{1}{x} \right) dx.\end{equation}
Further, taking $k=1$ in~\cite[Theorem~9]{alzer:97}, we 
can deduce that for all $x > 0$:
\begin{equation} \label{eq:psibds2}
 - \frac{1}{2x^2} - \frac{1}{6x^3}   \leq
- \psi'(x) + \frac{1}{x} \leq - \frac{1}{2x^2} - \frac{1}{6x^3} + \frac{1}{30x^5}.
\end{equation}
Substituting~\eqref{eq:psibds2} in~\eqref{eq:psibds} and integrating 
yields the claimed result.
\end{proof}

\begin{proof}[Proof of Proposition~\ref{prop:Ubd}]
Combining~\eqref{eq:Udef} with the standard fact that
$ \Gamma(n+1) = n \Gamma(n)$ we can write,
\begin{eqnarray*}
U(a, b) & = & b \log a + \log \Gamma(a-b) + \log(a-b) - \log \Gamma(a) - \log a \\
& = & (b-1) \log a + \log(a-b) + \int_{a-b}^a -\psi(x) dx.
\end{eqnarray*}
Now, we can provide an upper bound on this using the result $-\psi(x) \leq -\log x + 1/(2x) + 1/(12 x^2)$ from Lemma~\ref{lem:psibd} to deduce that:
\begin{eqnarray*}
U(a, b) & \leq & 
 (b-1) \log a + \log(a-b) + \left[ -(x-1/2) \log x
+ x - \frac{1}{12 x} \right]^a_{a-b} \\
& = & (a-b+1/2) \log \left( \frac{a-b}{a} \right)
+ b + \frac{1}{12(a-b)} - \frac{1}{12 a}.
\end{eqnarray*}
Similarly, since $-\psi(x)  \geq -\log x + 1/(2x) + 1/(12 x^2) - 1/(120 x^4)$ we can deduce that
\begin{eqnarray*}
 U(a,b) & \geq & A(a,b) - \int_{a-b}^a \frac{1}{120 x^4} dx
 = A(a,b) - \frac{1}{360} \left( \frac{1}{(a-b)^3} - \frac{1}{a^3} \right),
 \end{eqnarray*}
 and the result follows.
\end{proof}

\subsection{Proofs of Lemmas~\ref{lem:sumform1} and~\ref{lem:sumform2}}
\label{sec:newproofs2}

\begin{proof}[Proof of Lemma~\ref{lem:sumform1}] 
Using the fact that $\sum_{i=1}^c \ell_i = n$,
we first simplify the corresponding approximation 
terms from Proposition~\ref{prop:Ubd} into the form,
\begin{eqnarray}
\lefteqn{ A(n,k)  - \sum_{i=1}^c A(\ell_i, \ell_i k/n) } \nonumber \\
& = & \left( n - k + 1/2 - 
\left( \sum_{i=1}^c \ell_i(1-k/n) \right) - c/2 \right)
\log \left( \frac{n-k}{n} \right) + k - \frac{k}{n} \sum_{i=1}^c \ell_i \nonumber \\
& & + \frac{k}{12 n(n-k)} \left( 1 - \sum_{i=1}^c \frac{n}{\ell_i} \right) \nonumber \\
& = & \frac{c-1}{2} \log \left( \frac{n}{n-k} \right) 
+ \frac{k}{12 n(n-k)} \left( 1 - \sum_{i=1}^c \frac{n}{\ell_i} \right).
 \label{eq:simp}
\end{eqnarray}
Then we can use Proposition~\ref{prop:Ubd} to obtain that,
\begin{eqnarray*}
U(n,k) - \sum_{i=1}^c U(\ell_i, \ell_i k/n)
& \leq & A(n,k)  - \sum_{i=1}^c A(\ell_i, \ell_i k/n) + \sum_{i=1}^c \varepsilon(\ell_i, \ell_i k/n)  \\
& = &  A(n,k)  - \sum_{i=1}^c A(\ell_i, \ell_i k/n) +
 \frac{1}{360} \left( \frac{n^3}{(n-k)^3}  - 1 \right) \sum_{i=1}^c \frac{1}{\ell_i^3}, 
\end{eqnarray*}
and the upper bound follows using~\eqref{eq:simp}.
\end{proof}

In order to prove Lemma~\ref{lem:sumform3} we first provide 
a Taylor series based approximation for the 
summands in~(\ref{eq:relentsumforma}):

\begin{lemma} \label{lem:sumform2}
Each term in the second sum in~\eqref{eq:relentsumforma} satisfies:
\begin{eqnarray} \lefteqn{ - \ep U(\ell_i, S_i) + U(\ell_i, \ell_i k/n) } 
\nonumber \\
& \leq & - \psi'(\ell_i(1-k/n)+1) \frac{M_2(S_i)}{2}
+ \psi''(\ell_i(1-k/n)+1) \frac{M_3(S_i)}{6}. \label{eq:tosortout}
\end{eqnarray}
\end{lemma}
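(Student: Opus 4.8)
The plan is to regard $U(\ell_i,\cdot)$ as a smooth function of its second argument and to bound $\ep U(\ell_i,S_i)$ from below by its third-order Taylor expansion about the mean $\mu_i:=\ep S_i=\ell_i k/n$, using a convexity-type property of $U$ in its second coordinate.

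First I would fix $a=\ell_i$ and set $f(b):=U(a,b)=b\log a+\log\Gamma(a-b+1)-\log\Gamma(a+1)$, which is infinitely differentiable for $b\in[0,a]$, since then $a-b+1\in[1,a+1]$ stays away from the poles of $\Gamma$. Differentiating, using $\tfrac{d}{dx}\log\Gamma(x)=\psi(x)$ and the chain rule, gives $f'(b)=\log a-\psi(a-b+1)$, $f''(b)=\psi'(a-b+1)$, $f'''(b)=-\psi''(a-b+1)$ and $f''''(b)=\psi'''(a-b+1)$. The one fact to record is that $\psi'''>0$ on $(0,\infty)$ — for instance from the series $\psi'''(x)=6\sum_{n\ge0}(x+n)^{-4}$ — so $f''''>0$ on all of $[0,a]$.

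Next, since $f''''\ge 0$ on $[0,a]$, the function $f$ lies above its third-order Taylor polynomial based at any point of the interval; concretely, Taylor's theorem with integral remainder gives, for $b\in[0,a]$,
\[
f(b)\ \ge\ f(\mu_i)+f'(\mu_i)(b-\mu_i)+\tfrac{f''(\mu_i)}{2}(b-\mu_i)^2+\tfrac{f'''(\mu_i)}{6}(b-\mu_i)^3,
\]
because the remainder $\tfrac16\int_{\mu_i}^b f''''(t)(b-t)^3\,dt$ is nonnegative both when $b\ge\mu_i$ (the integrand is $\ge0$) and when $b<\mu_i$ (reversing the limits, the integrand is $\le0$, and the sign flips). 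I would then evaluate this inequality at $b=S_i$, which is legitimate since $0\le S_i\le\min(k,\ell_i)\le a$, and take expectations: the linear term drops out because $\ep(S_i-\mu_i)=0$, and the quadratic and cubic terms contribute $M_2(S_i)$ and $M_3(S_i)$, yielding $\ep f(S_i)\ge f(\mu_i)+\tfrac{f''(\mu_i)}{2}M_2(S_i)+\tfrac{f'''(\mu_i)}{6}M_3(S_i)$. Rearranging this and inserting $f''(\mu_i)=\psi'(\ell_i(1-k/n)+1)$ and $f'''(\mu_i)=-\psi''(\ell_i(1-k/n)+1)$ gives precisely the bound~\eqref{eq:tosortout}.

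The computation is short; the only point requiring care is the sign of the Taylor remainder in the regime $b<\mu_i$, where the reversal of the integration limits and the sign of $(b-t)^3$ must be tracked jointly — this is the main (and rather minor) obstacle, and the positivity $\psi'''>0$ is the only nontrivial analytic input needed to make it work.
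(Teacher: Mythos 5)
Your proof is correct and follows essentially the same route as the paper: Taylor-expand to third order about the mean $\mu_i=\ell_i k/n$, observe that $\psi'''>0$ forces the fourth-order remainder to have the right sign, take expectations, and let the linear term drop since $\ep(S_i-\mu_i)=0$. The only cosmetic differences are that the paper first reduces $-\ep U(\ell_i,S_i)+U(\ell_i,\mu_i)$ to $\ep[-\log\Gamma(\ell_i-S_i+1)]+\log\Gamma(\ell_i-\mu_i+1)$ before expanding, and uses the Lagrange form of the remainder rather than your integral form (and cites Alzer for $\psi'''\ge 0$ rather than the Hurwitz series), none of which changes the substance.
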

\begin{proof}
We can decompose the summand in~\eqref{eq:relentsumforma} in the form
\begin{eqnarray}  
-\ep U(\ell_i, S_i) + U(\ell_i, \ell_i k/n) 
 & = & - \ep \left( S_i \log \ell_i + \log \Gamma(\ell_i-S_i+1)  - \log \Gamma(\ell+1) \right) \nonumber \\
& &  + \left( \frac{\ell_i k}{n} \log \ell_i + \log \Gamma(\ell_i(1-k/n)+1)  - \log \Gamma(\ell+1) \right)
\nonumber \\
& = & -\log \Gamma(\ell_i-S_i+1) + \log \Gamma(\ell_i(1-k/n)+1) \label{eq:gammadiff}
 \end{eqnarray}
Recalling the definition  $\psi(x) =: \frac{d}{dx} \log \Gamma(x)$, 
we have the Taylor expansion,
\begin{eqnarray*}
\lefteqn{-\log \Gamma(\ell-s+1) + \log \Gamma(\ell-\mu+1)} \\
& = & \psi(\ell-\mu+1) (s-\mu)
- \psi'(\ell-\mu+1) \frac{(s-\mu)^2}{2}
+ \psi''(\ell-\mu+1) \frac{(s-\mu)^3}{6} 
- \psi'''(\xi) \frac{(s-\mu)^4}{24} \\
& \leq & \psi(\ell-\mu+1) (s-\mu)
- \psi'(\ell-\mu+1) \frac{(s-\mu)^2}{2}
+ \psi''(\ell-\mu+1) \frac{(s-\mu)^3}{6},
\end{eqnarray*}
where we used the fact,
e.g., by~\cite[Theorem~9]{alzer:97},
that $\psi'''(x) \geq 0$. 
The result follows by 
substituting this in~\eqref{eq:gammadiff} and taking expectations.
\end{proof}

\begin{proof}[Proof of Lemma~\ref{lem:sumform3}]
We will bound  the two terms from Lemma~\ref{lem:sumform2}.
Using~\cite[Theorem~9]{alzer:97} we can deduce that, for 
any $x >0$,
$$
- \psi'(x+1) \leq - \frac{1}{x+1} - \frac{1}{2(x+1)^2}
\leq - \frac{1}{x} + \frac{1}{2x^2}.
$$
Hence, recalling the value of $M_2(S_i)$ from~\eqref{eq:m2val},
we know that the first term of~\eqref{eq:tosortout} contributes 
an upper bound of,
\begin{eqnarray}
\lefteqn{ - \sum_{i=1}^c \psi'(\ell_i(1-k/n)+1) \frac{M_2(S_i)}{2}} \nonumber \\
& \leq & \frac{k}{2 n (n-1)} \sum_{i=1}^c  (n-\ell_i) \ell_i(1-k/n) \left( - \frac{1}{\ell_i(1-k/n)} + \frac{1}{2 \ell_i^2(1-k/n)^2} \right) 
\nonumber \\
& = &  \sum_{i=1}^c  - \frac{k(n-\ell_i)}{2 n(n-1)} + \frac{k (n-\ell_i)}{4n(n-1) \ell_i(1-k/n)}    \label{eq:v2summand} \\
& \leq & - \frac{ k (c-1)}{2(n-1)} + \frac{k}{4(n-k)(n-1)} \sum_{i=1}^c \left( \frac{n}{\ell_i} -1 \right),  \label{eq:v2final}
\end{eqnarray}
where we used the fact that $\sum_{i=1}^c (n-\ell_i) = n(c-1)$.

Since $k\leq n/2$ by assumption, we know from~\eqref{eq:m3val} 
that $M_3(S_r)$ has the same sign as $(n- 2 \ell_r)$. 
And since, as described in~\eqref{eq:swap}, we can assume 
that $\ell_1 \leq \ell_2 \leq \cdots \leq \ell_c$, we know 
that $\ell_r \leq n/2$ for all $r \leq c-1$,
and hence $M_3(S_r) \geq 0$. This means that, 
since $ \psi''(\ell_i(1-k/n)+1)$ is increasing in $\ell_i$,  
we can bound each 
$\psi''(\ell_i(1-k/n)+1) \leq \psi''(\ell_c(1-k/n)+1) $ so that:
\begin{equation}
\frac{1}{6} \sum_{i=1}^c \psi''(\ell_i(1-k/n)+1) M_3(S_i) 
\leq   \frac{1}{6} \sum_{i=1}^c \psi''(\ell_c(1-k/n)+1)  M_3(S_i). 
\label{eq:v3final}
\end{equation}
Finally, writing $f(\ell) = \ell(n-\ell)(n-2 \ell)$, we 
have that $f(\ell)+f(m-\ell)$ is concave and minimized 
at $\ell =1$ and $\ell =m-1$. Therefore,
the sum $\sum_{r=1}^c f(\ell_r)$ is minimized overall
when $\ell_1 = \cdots = \ell_{c-1} =1$ and $\ell_c = n-(c-1)$, giving the 
value $(c-1)(c-2) (3n-2c) \geq 0$. This means 
that $\sum_{i=1}^c M_3(S_i) \geq 0$, so~\eqref{eq:v3final} is negative, 
and we can simply use~\eqref{eq:v2final} as an upper bound.
\end{proof}

%

\subsection{Proofs for small \texorpdfstring{$\ell$}{}}
\label{sec:appsmallL}

Recall the form of the Newton series expansion;
see, for example~\cite[Eq.~(8), p.~59]{milne:book}:

\begin{lemma} \label{lem:newton}
Consider a function $f:(a,\infty)\to{\mathbb R}$ for some $a>0$.
Suppose that, for some positive integers $k$, $m$ and $y$, 
the $(k+1)$st derivative $f^{(k+1)}(z)$ of $f$ 
is negative for all $m \leq z \leq y$. 
Then, for any integer $x$ satisfying 
$m \leq x \leq y$, there exists some 
$\xi = \xi(x) \in (m,y)$ such that:
\begin{eqnarray*} 
 f(x) & =  & \sum_{r=0}^k \frac{\Delta^r f(m)}{r!} (x-m)_r 
	+  \frac{ f^{(k+1)}(\xi)}{(k+1)!} 
 (x-m)_{k+1} \\
& \leq &   \sum_{r=0}^k \frac{\Delta^r f(m)}{r!} (x-m)_r, 
\end{eqnarray*}
where as before $(x)_k$ represents the falling factorial and 
$\Delta^r$ is the $r$th compounded finite difference 
$\Delta$ where, as usual $\Delta f(x) = f(x+1) - f(x)$.
\end{lemma}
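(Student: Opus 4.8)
The plan is to recognize the sum $P(x) := \sum_{r=0}^k \frac{\Delta^r f(m)}{r!}(x-m)_r$ as the Newton forward-difference interpolating polynomial of $f$ at the nodes $m, m+1, \ldots, m+k$: it is a polynomial of degree at most $k$ in $x$, and the algebraic identity $\Delta^r f(m) = \sum_{j=0}^r (-1)^{r-j}\binom{r}{j} f(m+j)$ together with the Newton series formula recalled above from~\cite[Eq.~(8), p.~59]{milne:book} gives $P(m+j) = f(m+j)$ for every $j = 0, 1, \ldots, k$. With this identification, the asserted equality is precisely the classical Lagrange-type remainder formula for polynomial interpolation, once one notes that the nodal polynomial evaluated at $x$ is $\prod_{j=0}^k\big(x-(m+j)\big) = (x-m)(x-m-1)\cdots(x-m-k) = (x-m)_{k+1}$.

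Next I would derive that remainder formula by the standard auxiliary-function argument. If the integer $x$ lies in $\{m, m+1, \ldots, m+k\}$, then $f(x) = P(x)$ by the interpolation property and $(x-m)_{k+1} = 0$, so the claimed identity holds with a vanishing remainder term and any $\xi$ will do. Otherwise $m+k+1 \le x \le y$; set $\lambda := \big(f(x)-P(x)\big)/(x-m)_{k+1}$, which is well defined since $(x-m)_{k+1}\neq 0$, and consider
\[ g(t) := f(t) - P(t) - \lambda \prod_{j=0}^k\big(t-(m+j)\big), \qquad t \in [m,x] \subseteq [m,y]. \]
By construction $g$ vanishes at the $k+2$ points $m, m+1, \ldots, m+k$ and $x$; since $f$ is $(k+1)$-times differentiable on $[m,y]$ by hypothesis, iterating Rolle's theorem produces a point $\xi \in (m,x) \subseteq (m,y)$ with $g^{(k+1)}(\xi) = 0$. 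As $P$ has degree at most $k$ and $\frac{d^{k+1}}{dt^{k+1}}\prod_{j=0}^k\big(t-(m+j)\big) = (k+1)!$, this forces $f^{(k+1)}(\xi) = \lambda (k+1)!$, that is, $\lambda = f^{(k+1)}(\xi)/(k+1)!$, which is exactly the equality in the statement.

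Finally, for the displayed inequality I would argue on signs. Since $x$ and $m$ are integers with $m \le x$, the falling factorial $(x-m)_{k+1} = (x-m)(x-m-1)\cdots(x-m-k)$ is a product of consecutive nonnegative integers, hence nonnegative (indeed it is $0$ when $x-m \le k$ and a product of positive integers when $x-m \ge k+1$). Combined with the hypothesis $f^{(k+1)}(\xi) < 0$, which applies because $\xi \in (m,y) \subset [m,y]$, this makes the remainder term $\frac{f^{(k+1)}(\xi)}{(k+1)!}(x-m)_{k+1}$ nonpositive, so $f(x) \le P(x)$ as claimed. I do not expect a genuine obstacle: this is the textbook interpolation-error argument, and the only points needing care are the case split separating $x$ among the interpolation nodes from $x$ beyond them, and checking that the Rolle point $\xi$ really lands in $(m,y)$ — which is where the hypothesis $x \le y$ is used.
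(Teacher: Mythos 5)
Your proof is correct. The paper does not actually prove Lemma~\ref{lem:newton}; it simply recalls it as a classical fact with a citation to Milne's book, so there is no ``paper proof'' to compare against. What you have written is the standard textbook derivation of the Newton forward-difference formula with Lagrange-type remainder: identify $P(x) = \sum_{r=0}^k \frac{\Delta^r f(m)}{r!}(x-m)_r$ as the interpolating polynomial at the integer nodes $m, m+1, \ldots, m+k$ (which follows from the inverse finite-difference identity $f(m+j) = \sum_{r=0}^j \binom{j}{r}\Delta^r f(m)$ and $(j)_r = 0$ for $r>j$), obtain the remainder by the auxiliary-function/Rolle argument applied to $g(t) = f(t) - P(t) - \lambda\prod_{j=0}^k\bigl(t-(m+j)\bigr)$, and then read off the inequality from $f^{(k+1)}(\xi) < 0$ together with the observation that $(x-m)_{k+1}$ is a product of $k+1$ consecutive nonnegative integers (hence nonnegative) whenever $x,m$ are integers with $x\geq m$. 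Your case split between $x \in \{m,\ldots,m+k\}$ (vanishing nodal polynomial, zero remainder) and $x \geq m+k+1$ (Rolle applies over $[m,x]\subseteq[m,y]$) is exactly the care needed, and the hypothesis $x \le y$ is indeed used precisely to guarantee that the Rolle point $\xi$ lands inside $(m,y)$ where the sign hypothesis on $f^{(k+1)}$ is available. The only stray nit is the degenerate case $x=m=y$ where $(m,y)$ is empty so no $\xi$ literally exists; there the equality holds trivially with zero remainder, and this does not affect the use of the lemma in the paper, where $m=0$ and $y\geq 3$.
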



\begin{lemma} \label{lem:smallL2} For $\ell_i \geq 3$ we 
have the upper bound:
\begin{align}
- \ep U(\ell_i,S_i) 
 \leq &\;
\frac{(k)_2}{2(n)_2} \left( -\ell_i(\ell_i-1) \log 
\left( \frac{\ell_i}{\ell_i-1} \right) \right)  \nonumber \\
& \; + \frac{(k)_3}{6(n)_3} \left( - \ell_i(\ell_i-1)(\ell_i-2)  \log \left( \frac{(\ell_i-1)^2}{\ell_i(\ell_i-2)} \right) \right).  \label{eq:newton1}
\end{align}
\end{lemma}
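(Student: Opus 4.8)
The plan is to use the Newton series expansion of Lemma~\ref{lem:newton} applied to the function $f(s) = -U(\ell_i,s) = \log\bigl(\ell_i^{-s}(\ell_i)_s\bigr)$ viewed as a function of the drawn count $s$, expanded around the base point $m=0$, and then to take expectations using the factorial moments of $S_i$ from Lemma~\ref{lem:fallmom}. Recall that $U(\ell_i,0)=U(\ell_i,1)=0$, so the $r=0$ and $r=1$ terms of the Newton expansion vanish, and the expansion begins at $r=2$. I would therefore take $k=3$ in Lemma~\ref{lem:newton}, keeping the $r=2$ and $r=3$ terms explicitly and discarding the remainder, provided the fourth finite difference (equivalently, a suitable derivative) of $f$ has the right sign on the relevant range $0\le s\le \min(k,\ell_i)$.

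Concretely, the finite differences of $f$ at $m=0$ are straightforward to compute: $\Delta^2 f(0) = f(2)-2f(1)+f(0) = -U(\ell_i,2) = -\log\bigl(\ell_i/(\ell_i-1)\bigr) = \log\bigl((\ell_i-1)/\ell_i\bigr)$, wait—more carefully, $-U(\ell_i,2)=\log\bigl(\ell_i(\ell_i-1)/\ell_i^2\bigr)=\log\bigl((\ell_i-1)/\ell_i\bigr)$, and similarly $\Delta^3 f(0) = -U(\ell_i,3)+3U(\ell_i,2) = \log\bigl((\ell_i-1)^3(\ell_i-2)^{-1}\ell_i^{-2}\bigr)$; one checks these reduce to the logarithmic factors $-\log(\ell_i/(\ell_i-1))$ and $-\log((\ell_i-1)^2/(\ell_i(\ell_i-2)))$ appearing in~\eqref{eq:newton1}. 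Then Lemma~\ref{lem:newton} gives, for each integer $s$ in range, $f(s) \le \tfrac{\Delta^2 f(0)}{2}(s)_2 + \tfrac{\Delta^3 f(0)}{6}(s)_3$, and taking expectations over $S_i\sim H(n,k,\ell_i;\cdot)$ together with $\ep[(S_i)_2]=(k)_2(\ell_i)_2/(n)_2$ and $\ep[(S_i)_3]=(k)_3(\ell_i)_3/(n)_3$ from Lemma~\ref{lem:fallmom} yields exactly~\eqref{eq:newton1}, since $(\ell_i)_2=\ell_i(\ell_i-1)$ and $(\ell_i)_3=\ell_i(\ell_i-1)(\ell_i-2)$.

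The main obstacle is verifying the sign condition required by Lemma~\ref{lem:newton}: one needs $f^{(4)}(z) \le 0$ (equivalently the fourth finite difference is $\le 0$) for all $z$ in the interval $[0,\min(k,\ell_i)]$ (or $[m,y]$ with $m=0$), where $f(z) = z\log(1/\ell_i) + \log\Gamma(\ell_i - z + 1) - \log\Gamma(\ell_i+1)$. Differentiating, $f^{(4)}(z) = \psi'''(\ell_i - z + 1)$ up to sign bookkeeping — namely $f'(z) = -\log\ell_i - \psi(\ell_i-z+1)$, so $f''(z) = \psi'(\ell_i-z+1)$, $f'''(z) = -\psi''(\ell_i-z+1)$, and $f^{(4)}(z) = \psi'''(\ell_i-z+1)$. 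Since $\ell_i \ge 3$ and $z \le \ell_i$, the argument $\ell_i-z+1 \ge 1 > 0$, and by~\cite[Theorem~9]{alzer:97} (already invoked in the excerpt) $\psi''' \ge 0$, so in fact $f^{(4)}(z) \ge 0$ — which is the \emph{wrong} sign for the stated inequality. This tension must be resolved: the resolution is that the relevant expansion should be taken in the "complementary" variable, i.e.\ one works with $-U(\ell_i, \ell_i k/n)$ expanded appropriately, or equivalently one uses the Newton \emph{backward} form / a different base point so that the sign of the leading neglected term is controlled; alternatively the remainder term $f^{(k+1)}(\xi)(s-m)_{k+1}/(k+1)!$ carries a sign from the falling factorial $(s)_4$, which for $s\in\{0,1,2,3\}$ vanishes and for $s\ge 4$ is positive, so one needs $k\ge 4$ or a careful accounting. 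I would reconcile this by checking directly that, in the regime $\ell_i\ge 3$ and with $s$ ranging only over $\{0,1,\dots,\min(k,\ell_i)\}$, either the remainder vanishes (when $\min(k,\ell_i)\le 3$) or the genuine sign of $(s)_4 f^{(4)}(\xi)$ combined with the structure of the hypergeometric expectation still produces an upper bound; the cleanest route is to apply Lemma~\ref{lem:newton} with the roles arranged so that $f^{(k+1)}<0$ genuinely holds, which I expect follows by taking $f$ to be $-U$ as a function of $\ell_i - s$ rather than $s$, exploiting the reflection symmetry~\eqref{eq:swap2}. Once the sign is pinned down, the remainder of the argument is the routine substitution of factorial moments described above.
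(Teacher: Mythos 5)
Your plan is exactly the paper's: expand around $m=0$ via Lemma~\ref{lem:newton}, note the $r=0,1$ terms vanish, keep the $r=2,3$ terms with coefficients $\Delta^2 f(0)$, $\Delta^3 f(0)$ matching the stated logarithms, check the sign of the fourth derivative, and then take expectations using Lemma~\ref{lem:fallmom}. Your finite-difference computations are correct.

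The ``tension'' you run into, however, is entirely an artifact of a sign error when you rewrite $-U$ in terms of the gamma function. From Definition~\ref{def:Udef}, $U(a,b)=b\log a+\log\Gamma(a-b+1)-\log\Gamma(a+1)$, hence
\[
f(z) := -U(\ell_i,z) = -z\log\ell_i \;-\;\log\Gamma(\ell_i-z+1)\;+\;\log\Gamma(\ell_i+1),
\]
whereas you wrote $+\log\Gamma(\ell_i-z+1)-\log\Gamma(\ell_i+1)$, i.e.\ both gamma terms with the wrong sign. (Your earlier form $f(s)=\log\bigl(\ell_i^{-s}(\ell_i)_s\bigr)$ was right; the flip happened when converting to $\Gamma$.) With the correct expression one gets
\[
f'(z) = -\log\ell_i + \psi(\ell_i-z+1),\quad
f''(z) = -\psi'(\ell_i-z+1),\quad
f'''(z) = \psi''(\ell_i-z+1),\quad
f^{(4)}(z) = -\psi'''(\ell_i-z+1),
\]
so $f^{(4)}(z) \le 0$ on the whole relevant range $0\le z\le\min(k,\ell_i)$ because $\psi'''\ge 0$ there (the argument $\ell_i-z+1\ge 1>0$). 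This is precisely the sign condition Lemma~\ref{lem:newton} needs for an upper bound, so there is nothing to ``reconcile'': the speculations about backward differences, the complementary variable, or vanishing remainders are unnecessary. Fixing the sign and then taking expectations with Lemma~\ref{lem:fallmom} finishes the proof exactly as you outline in your second paragraph.

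One small alignment note: the order you choose for the Newton expansion, namely keeping terms through $r=3$ and controlling the remainder via $f^{(4)}$, is the correct one (the paper's phrasing ``with $m=0$ and $k=2$\ldots $f^{(3)}(\xi)\le 0$'' reads like a typo; it retains the $(s)_3$ term and so must in fact use $k=3$ and $f^{(4)}\le 0$, which is also fine since $f^{(4)}=-\psi'''\le 0$). Your instinct was therefore right.
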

\begin{proof} We write $-U(\ell,s) = - s \log \ell - \log \Gamma(\ell-s+1) + \log \Gamma(\ell)$,
and use Lemma~\ref{lem:newton} with $f(s) = -\log \Gamma(\ell-s+1)$ for fixed $\ell$.

Since $\Delta f(0) = f(1) -f(0) = \log \Gamma(\ell+1) 
- \log \Gamma(\ell) = \log \ell$, the Newton series expansion
of Lemma~\ref{lem:newton} with $m=0$ and $k=2$ gives,
\begin{eqnarray*}
-U(\ell,s) & = & f(s) - f(0) - s \Delta f(0) \\
& \leq & - \frac{s(s-1)}{2} \log \left( \frac{\ell}{\ell-1} \right)  -
\frac{s(s-1)(s-2)}{6} \log \left( \frac{(\ell-1)^2}{\ell(\ell-2)} \right), 
\end{eqnarray*}
where we used the fact that $f^{(3)}(\xi) \leq 0$. 
The result follows on taking expectations, using 
the values of the factorial moments from Lemma~\ref{lem:fallmom}.
\end{proof}

\begin{remark} \label{rem:consist}
Note that~\eqref{eq:newton1} remains valid for $\ell_i = 1,2$ as well, 
as long as we interpret the two bracketed terms 
in the bound appropriately:
For $\ell_i=1$ we assume that both brackets are zero, which is consistent 
with the fact that $U(1,0) = U(1,1) = 0$, so the expectation is zero. 
For $\ell_i =2$, the first bracket equals $-2 \log 2$ and again we assume 
the second bracket is zero. This is consistent with the fact that 
$U(2,0) = U(2,1) = 0$ and $U(2,2) = \log 2$, coupled with the fact 
that $H(n,k,2;2) = k(k-1)/(n(n-1)).$
\end{remark}

\begin{proof}[Proof of Proposition~\ref{prop:alt}]
The key is to deal with the two $\ep U(\ell_i,S_i)$ terms in~\eqref{eq:relentsumform} using two separate arguments, and to
think of $D \left( n,k, \vc{\ell} \right)$  as:
\begin{eqnarray} 
\lefteqn{ \left[ U(n,k) - U(n-\ell, (n-\ell) k/n) \right]  } \nonumber \\
& & + 
\left[ - \ep U(\ell, S_1) \right]
+ \left[ U(n-\ell, (n-\ell) k/n) - \ep U(n-\ell,S_2) \right].
\label{eq:smallL}
\end{eqnarray}
We treat the three terms of~\eqref{eq:smallL} separately.
\begin{enumerate}
\item
First, using the bounds of Proposition~\ref{prop:Ubd},
we obtain that the first term of~\eqref{eq:smallL} is bounded above as,
\begin{eqnarray}
\lefteqn{ U(n,k) - U(n-\ell, (n-\ell) k/n) } \nonumber \\
& \leq & A(n,k) - U(n-\ell, (n-\ell) k/n) + \varepsilon(n-\ell, (n-\ell) k/n) 
\nonumber \\
& = &
\ell( (1-k/n) \log(1-k/n) + k/n) - \frac{k \ell}{12 n(n-\ell)(n-k)} 
\nonumber \\ 
& & + \varepsilon(n- \ell,(n-\ell)k /n) \nonumber \\
& \leq & \ell( (1-k/n) \log(1-k/n) + k/n),
\label{eq:smallL1}
\end{eqnarray}
where the last inequality follows by using the facts 
that $n-\ell \geq n/2$ and $1-k/n \geq 1/2$ to get:
\begin{eqnarray*} \varepsilon(n - \ell, (n-\ell)k/n) & \leq &
\frac{1}{360 (n-\ell)^3 (1-k/n)^3} \nonumber \\
& \leq & \frac{2}{45 n^2 (n-\ell) (1-k/n)} \leq \frac{k \ell}{12 n (n-\ell) (n-k)}.
\end{eqnarray*} 

\item For the second term of~\eqref{eq:smallL},
we apply Lemma~\ref{lem:smallL2} with the same
making the same conventions as in Remark~\ref{rem:consist},
to obtain the upper bound:
\begin{equation}
\frac{(k)_2}{2(n)_2} \left( -\ell(\ell-1) \log \left( \frac{\ell}{\ell-1} \right) \right) 
+ \frac{(k)_3}{6(n)_3} \left( - \ell(\ell-1)(\ell-2)  \log \left( \frac{(\ell-1)^2}{\ell(\ell-2)} \right) \right).  \label{eq:smallL2}
\end{equation}

\item Using Lemma~\ref{lem:sumform2} and taking $\ell_i = n-\ell$ in 
each of the summands in~\eqref{eq:v2summand},
shows that the third term of~\eqref{eq:smallL} is bounded above by:
\begin{eqnarray}
\lefteqn{ - \frac{k \ell}{2 n(n-1)} + \frac{k \ell}{4(n-1) (n-\ell)(n-k)} + \frac{M_3(S_2)}{6} 
 \psi''((n-\ell)(1-k/n)+1) } \label{eq:interm} \\
& \leq & - \frac{k \ell}{2 n(n-1)} + \frac{k \ell}{4(n-1) (n-\ell)(n-k)} + \frac{k \ell}{2n(n-1)(n-2)} \nonumber \\
& \leq & - \frac{k \ell}{2 n(n-1)} + \frac{k \ell}{(n-1) (n-\ell)(n-k)}. \label{eq:smallL3}
 \end{eqnarray}
Here we simplified~(\ref{eq:interm})
by using the fact that, by~\cite[Theorem~9]{alzer:97},
we have,
$$- \psi''(x+1) \leq \frac{1}{(1+x)^2} + \frac{2}{(1+x)^3}
 \leq \frac{3}{x^3},$$
so that,
$$M_3(S_2) (\psi''((n-\ell)(1-k/n)+1) 
\leq\frac{k(n-2k) \ell(n-2\ell)}{2n(n-1)(n-2)(n-k)(n-\ell)}
\leq\frac{k\ell}{2n(n-1)(n-2)}.$$
\end{enumerate}
The result follows on adding~\eqref{eq:smallL1},~\eqref{eq:smallL2} and~\eqref{eq:smallL3}.
\end{proof}

\begin{remark} \label{rem:topsoe}
The last two terms in the bound of Proposition~\ref{prop:alt}
can be further simplified as follows.
Recall, e.g.\ from~\cite[Eq.~(3)]{topsoe:07},
that, for all $x \geq 0$:
\begin{equation} \label{eq:topsoe}
\frac{2 x}{2+x} \leq \log (1+x) \leq \frac{x(2+x)}{2(1+x)}.
\end{equation}
Taking $x=1/(\ell-1)$ in~\eqref{eq:topsoe} and rearranging gives that:
$$ 0  \leq   - \ell (\ell-1) \log \left( \frac{\ell}{\ell-1} \right) + \ell - 1/2 \leq \frac{1}{2(2 \ell -1)}.$$
 Similarly, taking $x = 1/(\ell(\ell-2))$  in~\eqref{eq:topsoe} and rearranging gives that:
$$ 0 \leq \frac{1}{2(\ell-1)} \leq  - \ell (\ell-1)(\ell-2) \log \left( \frac{(\ell-1)^2}{\ell(\ell-2)} \right) + \ell - 1
\leq \frac{\ell-1}{2 \ell^2 - 4 \ell + 1}.$$
\end{remark}

\newpage

\bibliography{ik}

\end{document}